\crefname{enumi}{Case}{Cases}
\newcommand{\etal}{{et~al.}}
\newtheorem{proposition}{Proposition}
\newtheorem{corollary}{Corollary}
\newcommand{\qedd}{\hspace*{\fill}$\Box$\ifmmode\else\par\addvspace\topsep\fi}
\newenvironment{proofof}[1][]{\par\addvspace\topsep\noindent{\bf Proof of #1:} \ignorespaces
}{\qedd}
\DeclarePairedDelimiter\abs{\lvert}{\rvert}
\def\nats{\ensuremath{\mathbb{N}}}
\def\Oh{\ensuremath{\mathcal{O}}}
\def\root{\ensuremath{\rho}} 
\def\trees{\ensuremath{\mathcal{T}_n}} 
\def\treesNoN{\ensuremath{\mathcal{T}}} 
\def\nets{\ensuremath{\mathcal{N}_n}} 
\def\rphytrees{rooted phylogenetic trees\xspace} 
\def\rphynets{rooted phylogenetic networks\xspace} 
\def\phytree{phylogenetic tree\xspace} 
\def\phytrees{phylogenetic trees\xspace} 
\def\phynet{phylogenetic network\xspace} 
\def\phynets{phylogenetic networks\xspace}
\def\tchi{tree-child\xspace}
\def\tchinets{\ensuremath{\mathcal{TC}_n}}
\def\tchinetsNoN{\ensuremath{\mathcal{TC}}}
\def\tsib{tree-sibling\xspace}
\def\tbased{tree-based\xspace}
\def\retvis{reticulation-visible\xspace}
\def\trias{\ensuremath{t_3}} 
\def\treebranchtriangles{\ensuremath{t_3^{*}}} 
\def\treebottomedges{\ensuremath{E_{\bar{\trias}}}}
\def\dias{\ensuremath{d_4}} 
\def\trapis{\ensuremath{t_4}} 
\newcommand{\OPSNPR}{\textup{SNPR}\xspace}
\newcommand{\SNPR}{\textup{\texttt{SNPR}}\xspace}
\newcommand{\SNPRP}{\textup{\texttt{SNPR$+$}}\xspace}
\newcommand{\SNPRM}{\textup{\texttt{SNPR$-$}}\xspace}
\newcommand{\NNI}{\textup{\texttt{NNI}}\xspace}
\newcommand{\NNIP}{\textup{\texttt{NNI$+$}}\xspace}
\newcommand{\NNIM}{\textup{\texttt{NNI$-$}}\xspace}
\begin{document}

\doi{10.7155/jgaa.00472} 
\Issue{22}{2}{329}{355}{2018} 
\HeadingAuthor{Klawitter}
\HeadingTitle{The SNPR neighbourhood of tree-child networks}
\title{The SNPR neighbourhood of tree-child networks} 
\Ack{Research supported by New Zealand Marsden Fund} 
\author[]{Jonathan Klawitter}{jo.klawitter@gmail.com} 
\affiliation[]{Department of Computer Science, University of Auckland, New Zealand} 

\submitted{July 2017}%
\reviewed{January 2018}%
\revised{January 2018}%
\reviewed{April 2018}%
\revised{April 2018}%
\accepted{July 2018}%
\final{July 2018}%
\published{August 2018}%
\type{Regular Paper}%
\editor{F. Vandin}%
 
\maketitle 

\begin{abstract}
Network rearrangement operations like SNPR (SubNet Prune and Regraft), a recent generalisation of
rSPR (rooted Subtree Prune and Regraft), induce a metric on phylogenetic networks. To search the
space of these networks one important property of these metrics is the sizes of the neighbourhoods,
that is, the number of networks reachable by exactly one operation from a given network. In this
paper, we present exact expressions for the SNPR neighbourhood of tree-child networks, which depend
on both the size and the topology of a network. We furthermore give upper and lower bounds for
the minimum and maximum size of such a neighbourhood.
\end{abstract}

\Body

\section{Introduction}
\label{sec:introduction}

Phylogenetic trees and networks are used to represent and study the evolutionary relationships of
species and languages. The set of all hypothesised networks to model the relationships for a set
of data is referred to as a space of phylogenetic networks.
To navigate and work with such a space one common tool is using rearrangement operations that
transform one network into another one and thereby induce a metric on the space. 
For phylogenetic trees three well known such operations are the Nearest Neighbour Interchange
(NNI)~\cite{Rob71}, the Subtree Prune and Regraft (SPR) and Tree Bisection and Reconnection
(TBR)~\cite{AS01} operations.
In recent years, the study of rearrangement operations has moved from the tree space to the network
space. 
For example, the NNI operation has been generalised from trees to networks by Huber
\etal~\cite{HLMW16}, and further considered in its unrooted variant~\cite{HMW16,FHMW17} 
and its rooted variant~\cite{GvIJLPS17,JJEvIS17}.
Francis \etal~\cite{FHMW17} also introduced SPR and TBR on unrooted \phynets.
Furthermore and as with this paper, the generalisation of rSPR (rooted SPR) operation for
\rphynets has been studied. 
Bordewich \etal~\cite{BLS16} introduced the SubNet Prune and Regraft (SNPR) operation, which
allows to navigate between \phynets with both the same and with different number of reticulations.
Gambette \etal~\cite{GvIJLPS17} and Janssen \etal~\cite{JJEvIS17} studied a slightly more powerful
rSPR operation (reusing the original name) for \phynets with the same number of reticulations.

An interesting property of these operations and spaces is the sizes of a neighbourhood of a
network. The neighbourhood problem with respect to a type of operation asks how many
networks in the space are exactly one such operation apart from a given network. 
Robinson~\cite{Rob71} already considered this question when he laid the foundation for the studies
of rearrangement operations with the introduction of NNI on unrooted phylogenetic trees.
Allen and Steel~\cite{AS01} solved the problem for SPR on unrooted phylogenetic trees.
The size of the neighbourhood of an unrooted phylogenetic tree, for both NNI and SPR, only depends
on the number of leaves of the tree. However, Allen and Steel~\cite{AS01} further showed that for
unrooted trees and TBR, the size of the neighbourhood depends not only on the number of leaves but
also on the topology of the tree.
Humphries and Wu~\cite{HW13} later gave a closed formula for the neighbourhood under TBR using the
non-trivial splits of a tree to represent the topology.
Beyond that, Baskowski \etal~\cite{BMSW15} considered the problem for SPR and TBR on unrooted
\phytrees that are restricted to a circular ordering of its leaves, and de~Jong \etal~\cite{DJMLS16}
considered the problem of finding neighbours that are two or more operations away for an unrooted
\phytree.
For \rphytrees and rSPR the size of the neighbourhood depends again on the number of leaves,
but also on the topology of the tree. Song~\cite{Son03} gave a formula for this problem where he
characterises the size by the number of ancestors of each vertex in a tree. He used a recursive
approach to count neighbours, which he then transformed into a closed formula. Furthermore,
Song~\cite{Son06} did the same for totally ordered \phytrees.

The problem of determining the neighbourhood size gets harder for \phynets. Huber
\etal~\cite{HLMW16} solved the problem for NNI operations on unrooted level-1 networks. They showed
that the size depends on structures of the network like the number of cycles of size three and four.
Gambette \etal~\cite{GvIJLPS17} extended this with an upper bound for the neighbourhood size of
an unrooted \phynet and NNI, but restricted to networks with a fixed number of reticulations.
Similarly, Francis \etal~\cite{FHMW17} gave an upper bound for the same networks but for SPR instead
of for NNI.
In this paper, we will show that for \rphynets and SNPR
the neighbourhood size depends on the number of leaves, the topology of the network in terms of
descendants and occurrences of certain structures.
We will outline why identifying these structures is difficult for \rphynets in general and for
the classes of \tbased~\cite{FS15} and \retvis~\cite{HK07} networks. However, for classes like
\tchi~\cite{Wil07}, normal~\cite{Wil10} and level-1~\cite{GBP09} networks, the dependencies of the
neighbourhood size on the topology are comprehensible. 
We will focus on the class of tree-child networks, a class for which several problems that are
difficult in general can be solved efficiently~\cite{vISS10,CLS14,BS16,BST17}.
The class of tree-child networks is also not as restricted as normal and level-1 networks, which are
in fact subclasses of it.
The main result of this paper is a formula for the neighbourhood size of a tree-child network under
SNPR. As byproduct we reprove the formula for \rphytrees by Song.
We also give bounds on the neighbourhood size that only depend on the number of leaves
(\cref{sec:minMaxValues}).
First, however, we introduce the notation and terminology used throughout this paper.

\section{Preliminaries}
\label{sec:preliminaries}

We now recall the definitions of \rphynets, the class of tree-child networks, and the SNPR and NNI
operations. We also formally define the unit neighbourhood problem and structures of a \phynet like
triangles, diamonds and trapezoids.

\paragraph{Phylogenetic networks.}
A \emph{rooted binary \phynet} $N = (V, E)$ is a directed acyclic graph with edges $E$ and the
following vertices $V$:
\begin{itemize}
  \item the \textit{root} $\root$ with in-degree zero and out-degree one,
  \item $n$ \emph{leaves} with in-degree one and out-degree zero bijectively labelled with a set of taxa,
  \item \emph{inner tree vertices} with in-degree one and out-degree two, and
  \item \emph{reticulations} with in-degree two and out-degree one. 
\end{itemize}
The \emph{tree vertices} of $N$ are the union of the inner tree vertices, the leaves and the root.
The unique edge incident to the root is called the \emph{root edge} $e_\root$. 
An edge $e = (u, v)$ is called a \emph{reticulation edge}, if $v$ is a
reticulation, and a \emph{tree edge}, if $v$ is a tree vertex. An edge not incident to the root or
a leaf is an \emph{inner} edge.
Furthermore, we call $e = (u, v)$ \emph{pure}, if $u$ and $v$ are both either tree vertices or
reticulations, and \emph{impure} otherwise.
Throughout this paper we assume that $n \geq 2$ and let $r$ denote the number of
reticulations. There are thus $m = 2n + 3r - 1$ edges in $N$ \cite[Lemma 2.1]{MSW15}. 

Our definition of a rooted binary \phynet allows the existence of parallel edges. 
Furthermore, we note that our definition of the root is known as \emph{pendant root} \cite{BLS16}
and that it differs from another common definition where the root has out-degree two. Our
variation serves both elegance and technical reasons. 
Since we only consider a fixed set of taxa, we omit its notation.
Moreover, throughout this paper we only consider phylogenetic networks that are both rooted and
binary and therefore refer to them simply as \phynets.

Let $N$ be a \phynet. For two vertices $u$ and $v$ in $N$, we say that $u$ is a \emph{parent} of
$v$ and $v$ is a \emph{child} of $u$, if there is an edge $(u, v)$ in $N$.
We say $u$ is \emph{ancestor} of $v$ and $v$ is \emph{descendant} of $u$ if there is a directed path
from $u$ to $v$ in $N$. 
We say $u$ and $v$ are \emph{siblings} if they have a common parent. 
For ease of use, we also say that $u$ is an \emph{uncle} of $v$, if $u$ is
sibling of a parent of $v$. In reverse, $v$ is then the \emph{nephew} of $u$.

Let $(u, v), (x, y)$ be edges of $N$.
We say $(x, y)$ is a parent edge of $(u, v)$ if $y = u$. Consequently, $(u, v)$ is then a child edge
of $(x, y)$. The two edges are considered siblings if $u = x$.
We say $(u, v)$ is a descendant of $(x, y)$ if either $y = u$  or if $u$ is a descendant of $y$. In
return, $(x, y)$ is then an ancestor of $(u, v)$.
For an edge $e$ of $N = (V, E)$ we use the function $\delta \colon E \to \nats$ to count the number
of descendant edges of $e$, i.e.
$\delta(e) := \abs{ \{f \in E \mid f \text{ is descendant of } e\} }$. For example, in the
phylogenetic network $N_2$ in \cref{fig:SNPR} the edge $g$ has $\delta(g) = 5$ descendants and the
root edge $e_\root$ has $\delta(e_\root) = 2n + 3r - 2 = 17$ descendants. We note that we do not
consider a vertex or an edge to be its own ancestor or descendant.

\paragraph{Network classes.}
A \phynet that has no reticulations is called a rooted binary phylogenetic tree, or in
this paper simply a \emph{\phytree}.
A \phynet in which every non-leaf vertex has a tree vertex as child is a \emph{tree-child} network. 
We denote by $\nets$ all \phynets with $n$ leaves, and with
$\trees$ and $\tchinets$ the subsets of $\nets$ consisting of all \phytrees and tree-child networks,
respectively.

One important well known property of tree-child networks is that each vertex $v$ contains a path to a
leaf consisting only of tree edges. Such a path is called a \emph{tree path} of $v$. Another
property is that a tree-child network has at most $n - 1$ reticulations \cite[Proposition 1]{CRV09}. 

A symmetry of a \phynet $N$ can be interpreted as an automorphism on $N$, distinct from the
identity function, that fixes the leaf set of $N$. The next proposition shows that tree-child
networks have no such symmetries. It is a reformulation of a result by McDiarmid
\etal~\cite[Lemma 5.1]{MSW15}. This implies that every vertex and every edge of a tree-child network is
uniquely identifiable, for example recursively by its set of descendant edges.

\pagebreak[4]
\begin{proposition}\label{thm:tchi:oneAutomorphism}
Let $N \in \tchinetsNoN$.

 \noindent
Then $N$ has exactly one automorphism that fixes its leaf set.
\end{proposition}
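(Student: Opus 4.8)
The statement asserts that the only automorphism of $N$ fixing every leaf is the identity; since the identity always qualifies, it suffices to prove uniqueness. (Read literally, ``fixes its leaf set'' can only mean pointwise: every graph automorphism maps the set of out-degree-zero vertices to itself, so the setwise version would be vacuous and false — leaf-swapping automorphisms exist for many networks.) So I would let $\phi$ be a graph automorphism of $N$ with $\phi(\ell)=\ell$ for every leaf $\ell$, and show $\phi(v)=v$ for every vertex $v$. Note first that $\phi$ preserves in- and out-degrees, hence it maps tree vertices to tree vertices, reticulations to reticulations, and the root to the root.

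The plan is to induct on $d(v)$, the maximum length of a directed path starting at $v$. Since $N$ is a finite acyclic digraph whose only sinks are the leaves, $d(v)$ is a well-defined non-negative integer, and $d(v)=0$ precisely when $v$ is a leaf; this is the base case, which holds by hypothesis. For the inductive step, let $v$ be a non-leaf vertex. Because $N$ is \tchi, $v$ has a child $c$ that is a tree vertex, and $d(c)<d(v)$ since any directed path from $c$ extends, via the edge $(v,c)$, to a strictly longer one from $v$; hence $\phi(c)=c$ by induction. As $c$ is a tree vertex it has in-degree one, so $v$ is its unique parent; then $\phi(v)$ must be a parent of $\phi(c)=c$, which forces $\phi(v)=v$. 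This completes the induction, so $\phi$ is the identity on the vertex set.

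It remains to promote this to the edge set, and here one must be slightly careful because the definition of a \phynet permits parallel edges. However, a pair of parallel edges $u\to w$ would make $w$ a reticulation and $u$ an inner tree vertex both of whose out-edges end at $w$, so $u$ would have no tree-vertex child, contradicting the \tchi property. Thus $N$ is a simple digraph, and an automorphism fixing every vertex fixes every edge; therefore $\phi=\mathrm{id}$, and the claimed uniqueness (hence exactness) follows. The statement about unique identifiability of vertices and edges then comes out of the same argument, reading it as: each vertex is recursively pinned down by an in-degree-one child, and ultimately by its descendant edges.

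I do not expect a serious obstacle here. The one place that needs care is exactly the interplay between vertices and edges under the (a priori admissible) parallel edges, and the single structural input that makes everything work is the \tchi property, used in the form ``from any non-leaf one can step to a child of in-degree one, which uniquely determines its parent.'' The rest is a routine induction.
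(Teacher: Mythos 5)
Your proof is correct, but note that the paper does not actually prove this proposition at all: it presents it as a reformulation of a known result and simply cites McDiarmid \etal~\cite[Lemma 5.1]{MSW15}. So what you have written is a self-contained argument replacing that citation rather than a variant of an in-paper proof. The argument itself is sound: the pointwise reading of ``fixes its leaf set'' is the intended one (the leaves carry labels), and the induction on the longest directed path out of a vertex works because the \tchi property hands every non-leaf vertex $v$ a child $c$ of in-degree one (a leaf or inner tree vertex; the root cannot be a child), so $\phi(c)=c$ forces $\phi(v)=v$ as $v$ is the unique parent of $c$. Your treatment of the multigraph subtlety is also the right thing to check, since the paper's general definition of a \phynet admits parallel edges: a pair of parallel edges $u\to w$ would leave $u$ with the single child $w$, a reticulation, violating the \tchi condition, so tree-child networks are simple and fixing all vertices fixes all edges. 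The only superfluous step is the opening remark that $\phi$ preserves degrees and hence vertex types; your induction never uses it. In short, your route is an elementary direct proof of what the paper imports as a black box, which is a reasonable trade: it costs a paragraph but makes the key structural input (step to an in-degree-one tree child and climb back up) explicit, which is exactly the uniqueness property the rest of the paper leans on when identifying vertices and edges by their descendant sets.
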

 
Note that it can also be shown that so-called normal networks, \tsib networks and level-1
networks without parallel edges have no such symmetries~\cite{SpOPhyN}. 
We will see why this is favourable for counting neighbours in the
next section, and discuss at the end, in \cref{sec:discussion}, why the problem gets harder
for more complex networks, which can have such symmetries.

\paragraph{Suboperations.}
To define SNPR operations we first have to define several suboperations. Let $G$ be a
directed acyclic graph. A degree-two vertex $v$ of $G$ with parent $u$ and child $w$ gets
\emph{suppressed} by deleting $v$ and the edges $(u,v)$ and $(v, w)$ and adding the edge $(u,
w)$. An edge $(u, w)$ of $G$ gets \emph{subdivided} by adding a new vertex $v$, deleting the edge
$(u, w)$ and adding the edges $(u, v)$ and $(v, w)$. Hence, a subdivision is the reverse of a
suppression.

Let $N$ be a \phynet. We say that an edge $e = (u, v)$ of $N$ with $u$ not a reticulation gets
\emph{pruned} by transforming it into the half edge $(., v)$ and suppressing $u$. In reverse, we say
a half edge $(., v)$ gets \emph{regrafted} to an edge $(u, w)$ by becoming the edge $(u', v)$ where
$u'$ is a new vertex subdividing $(u, w)$.

\paragraph{SNPR.}
Let $N \in \nets$. Let $e = (u, v)$ with $u$ not a reticulation and $f = (x, y)$ an edge
that is not a descendant edge of $e$. 
Then, like Bordewich \etal~\cite{BLS16}, we define the \emph{SubNet Prune and Regraft} (SNPR)
operation that transforms $N$ into a \phynet $N' \in \nets$ by applying exactly one of the
following three operations:
\begin{enumerate}[leftmargin=*,label=(SNPR+)]
  \item[(\SNPR)] If $f \neq e$, an \SNPR operation $(e, f)$ prunes $e$ and regrafts it to $f$.
  \item[(\SNPRP)] If $f \neq e$, an \SNPRP operation $(e, f)$ subdivides $f$ and $e$ with new
  vertices $u'$ and $v'$, respectively, and adds the edge $(u', v')$. If $e = f$, an \SNPRP
  operation $(e, f)$ subdivides $e$ twice with $u'$ and $v'$ such that $u'$ is parent of $v'$, and adds the edge
  $(u', v')$.
  \item[(\SNPRM)] If $e$ is a reticulation edge, an \SNPRM operation $(e)$ deletes $e$ and
  suppresses $u$ and $v$.
\end{enumerate}
For an \SNPR operation $(e, f)$ (or equivalently for other types of operations), we write $(e,
f)(N)$ to denote the \phynet $N'$ that results from applying $(e, f)$ to $N$. We note that \SNPR 
is for \rphytrees indeed a generalisation of rSPR. 
Bordewich \etal~\cite{BLS16} have shown that the three types of SNPR operations are
reversible. This means that for every \SNPR operation that transforms $N$ into $N'$, there exists an
\SNPR operation that transforms $N'$ into $N$, and that for every \SNPRP operation, there exists an
inverse \SNPRM operation, and vice versa. The SNPR operation induces thus a distance
function and a metric on $\nets$. 
Bordewich \etal~\cite{BLS16} also showed that $\nets, \trees$ and $\tchinets$ and other classes are
connected under SNPR. Moreover, they showed that the corresponding diameter of $\tchinets$ is linear
in $n$.

\begin{figure}[htb]
 \centering
 \includegraphics{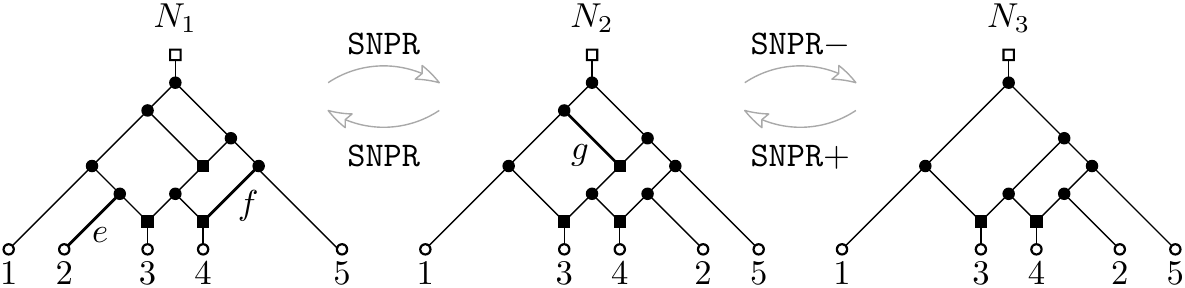}
 \caption{The \phynet $N_2$ can be obtained from $N_1$ by the \SNPR operation $(e,f)$.
 The \phynet $N_3$ can be obtained from $N_2$ with the \SNPRM operation $(g)$. Both operations have
 a corresponding \SNPR and \SNPRP operation, respectively, that reverses the transformation.}
 \label{fig:SNPR}
\end{figure}

\paragraph{NNI.}
The \emph{Nearest Neighbour Interchange} (NNI) operation was defined on (unrooted)
\phytrees~\cite{Rob71}, but has recently been generalised to unrooted and rooted \phynets by Huber
\etal~\cite{HLMW16,HMW16}. We define the NNI operation here only for tree-child networks, because we will
only use it as a tool to count special sets of SNPR operations. Furthermore, our notation and
explanation below differ from the one of Huber \etal~\cite{HLMW16,HMW16} for technical reasons and
since we consider rooted \phynets.

Let $N \in \tchinets$ and $e = (u, v)$ be an edge of $N$. Note that $e$ can not be a pure
reticulation edge, since $N$ is tree child. If $e$ is an inner edge, let $g \neq e$ be an edge
incident to $v$. If $u$ is a tree vertex, let $f$ be the sibling edge of $e$ and otherwise a parent
edge of $e$.
Then we define the \emph{Nearest Neighbour Interchange} (NNI) operation that transforms $N$ into
$N' \in \tchinets$ by applying exactly one of the following three operations:
\begin{enumerate}[leftmargin=*,label=(NNI+)]
  \item[(\NNI)]  If $e$ is an inner tree edge, an \NNI operation $(f, e, g)$ prunes $g$ and
  regrafts it to $f$, and, if $e$ is a reticulation edge, prunes $f$ and regrafts it to $g$.
  \item[(\NNIP)] An \NNIP operation $(e, f)$ subdivides $f$ with vertex $u'$ and $e$ with vertex
  $v'$, and adds the edge $(u', v')$.
  \item[(\NNIM)] If $e$ is the long side of a triangle (defined
  later), an \NNIM operation $(e)$ deletes $e$ and suppresses $u$ and $v$.
\end{enumerate}
The edge $e$ is called the \emph{axis} of the \NNI operation $(f, e, g)$. Note that our
definitions allow that $N' = N$. \cref{fig:NNI} illustrates the three types of NNI operations. We
note that all three types of NNI operations are special cases of SNPR operations. Furthermore,
like for SNPR, we observe that \NNI operations are reversible and that \NNIP and \NNIM operations
are mutually inverse.

\begin{figure}[htb]
 \centering
 \includegraphics{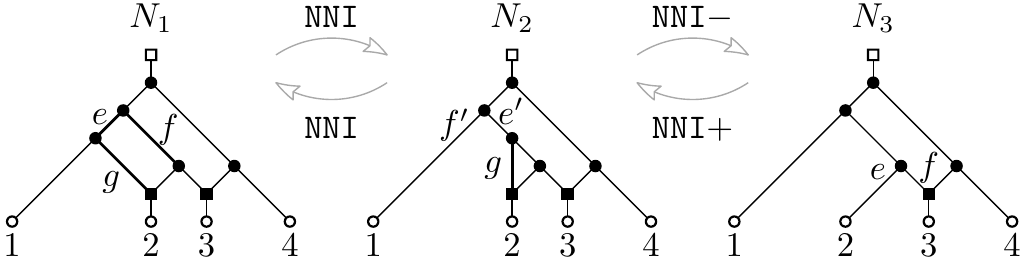}
 \caption{The \phynet $N_2$ can be obtained from $N_1$ by the \NNI operation $(f, e, g)$.
 $N_3$ can be obtained from $N_2$ with the \NNIM operation $(g)$. Both operations have a
 corresponding \NNI operation $(f', e', g)$ and \NNIP operation $(e, f)$, respectively, that
 reverses the transformation.} 
 \label{fig:NNI}
\end{figure}

\paragraph{Neighbourhood.}
Two \phynets that are one SNPR operation apart are called \emph{SNPR neighbours}.
The \emph{unit SNPR neighbourhood} $U_{\OPSNPR}(N)$ of $N$ is the set of all SNPR
neighbours of $N$. When we consider a tree-child network $N$, we are only
interested in neighbours that are also tree child. We call this
neighbourhood the \emph{unit SNPR tree-child neighbourhood} and denote it
by $U_{\OPSNPR}^{\tchinetsNoN}(N)$. \cref{fig:tchi:neighbourhood} gives an example of a tree-child
network and its unit tree-child SNPR neighbourhood.

For $N \in \nets$, we denote by $\Theta_{\OPSNPR}(N)$ the set of all SNPR operations on
$N$. (This can, in fact, be a multi-set, since $(e, f)$ can denote an \SNPR or an \SNPRP operation.)
An operation on a tree-child network that yields again a tree-child network is called
\emph{tree-child respecting}.
If $N \in \tchinets$, we write $\Theta_{\OPSNPR}^{\tchinetsNoN}(N)$ to denote the set of
\tchi-respecting SNPR operations on $N$. The definitions for $\trees$ and \SNPR, \SNPRP, and \SNPRM
are analogous.

An operation $\theta \in \Theta_{\OPSNPR}(N)$ is called \emph{trivial}, if $\theta(N) = N$.
Furthermore, we call two distinct operations on $N$ \emph{redundant}, if they yield
the same \phynet $N'$. A set of pairwise redundant operations is called a \emph{redundancy set}.
Note that $\abs{ \Theta_{\OPSNPR}(N) } \geq \abs{ U_{\OPSNPR}(N)}$, since
there can be trivial operations and redundancy sets.

\begin{figure}[htb]
 \centering
 \includegraphics[width=\textwidth]{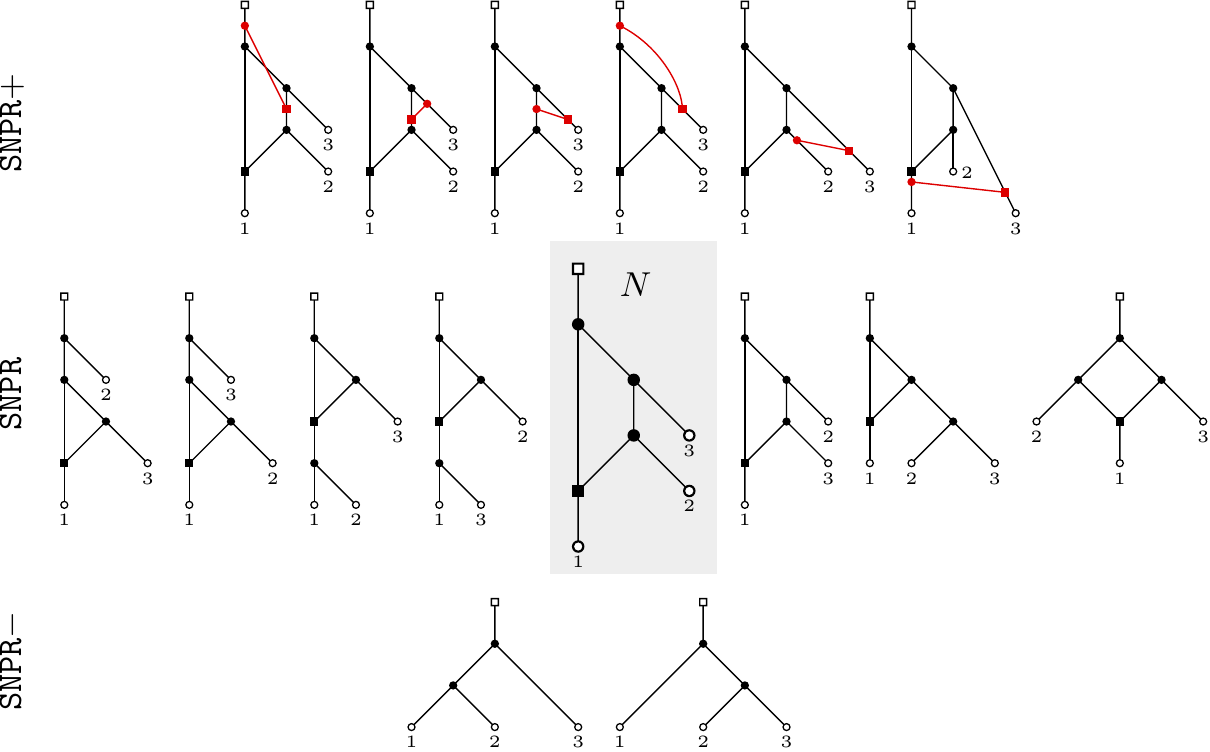}
 \caption{A tree-child network $N$ in the middle, with its SNPR tree-child neighbourhood
 around it. The top row consists of the \SNPRP neighbours, the middle row of the \SNPR neighbours,
 and the bottom row of the \SNPRM neighbours.} 
 \label{fig:tchi:neighbourhood}
\end{figure}

\paragraph{Structures.}
Let $N \in \tchinets$. In the following we define certain subgraphs of $N$, which we call
structures, that are the determining factor of whether operations on $N$ are
tree-child respecting, trivial and redundant. 
\cref{fig:tchi:neighbourhood:structures} accompanies our description of these structures.

\begin{figure}[htb]
 \centering
 \includegraphics[width=\linewidth]{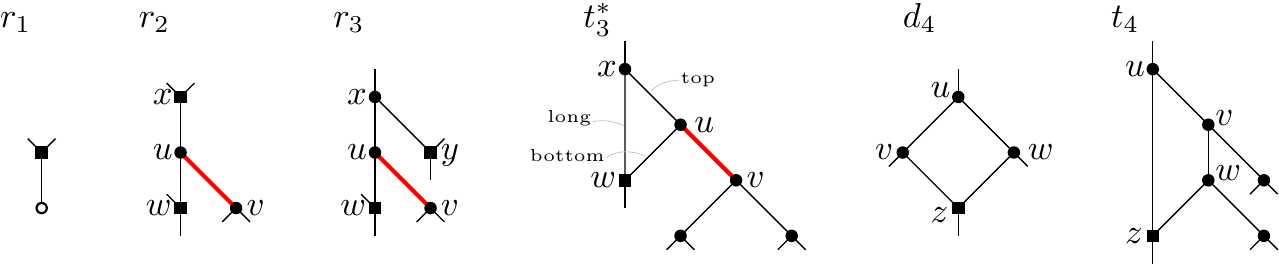}
 \caption{An $r_1$, $r_2$, and $r_3$ structure, a tree-branching triangle $\treebranchtriangles$,
 a diamond $\dias$, a trapezoid with outgoing tree edges $\trapis$. The critical edges (bold red)
 and the paths between the reticulations of the $r_2$ and $r_3$ structure are highlighted.}
 \label{fig:tchi:neighbourhood:structures}
\end{figure}

An $r_2$ structure of $N$ is a path of length two from a reticulation $x$ via a vertex $u$ to a
reticulation $w$. An $r_3$ structure consists of four vertices $x, y, u, w$ with edges $(x, y)$,
$(x, u)$ and $(u, w)$ where $y$ and $w$ are reticulations. 
We refer to the undirected path $w, u, x, y$ as the \emph{underlying path} of the
structure.
Note that in both an $r_2$ and an $r_3$ structure, since $N$ is tree child, both $u$ and its second
child $v$ are tree vertices. 
We abuse the notation to denote by $r_2$ and $r_3$ also the number of these structures in $N$. 
We define $r_1$ as the number of reticulations in $N$ whose child is a leaf. 

Note that an $r_3$ structure with $y = w$ is a triangle. Formally, a \emph{triangle} of $N$
consist of three vertices $x, u, w$ with the edges $(x, u)$, $(x, w)$ and $(u, w)$. We call the edge
$(x, u)$ the \emph{top side}, $(x, w)$ the \emph{long side}, and $(u, w)$ the \emph{bottom side} of
the triangle. 
We denote the number of triangles in $N$ by $\trias$. 
Furthermore, let $v \neq w$ be the second child of $u$. 
We note that since $N \in \tchinets$, we know that $u$ is a tree vertex.
Then if $v$ is incident to three pure tree edges, we call the triangle a \emph{tree-branching
triangle}.
We denote the number of those by $\treebranchtriangles$. 
We want to point out that every tree-branching triangle of $N$ is counted as an $r_3$ structure, as
a triangle and as a tree-branching triangle.

For an $r_2$, $r_3$ structure, or a triangle, with the notation from above, we call the tree edge
$(u, v)$ the \emph{critical} edge of this structure. See again
\cref{fig:tchi:neighbourhood:structures}, where the critical edges are highlighted, and note how
pruning them yields a vertex without a tree child. This will be important in the next section when
we consider tree-child respecting \SNPR operations.

A \emph{diamond} of $N$ is an undirected four-cycle consisting of edges $(u, v)$, $(u,
w)$, $(v, z)$ and $(w, z)$. 
A \emph{trapezoid} is an undirected four-cycle consisting of edges $(u, v)$, $(v, w)$, $(w, z)$ and
$(u, z)$. 
We note that in both cases $z$ is a reticulation. 
Important for us are trapezoids with the outgoing edges of the four-cycle at $v$ and $w$ being pure
tree edges.
We denote by $\dias$ the number of diamonds and with $\trapis$ the number of trapezoids with two 
outgoing pure tree edges.
\section{SNPR neighbourhood of a tree-child network}
\label{sec:neighbourhood:SNPR:tchi}

Throughout this section let $N \in \tchinets$. To count tree-child neighbours of $N$, it is
necessary to understand whether an SNPR operation on $N$ results again in a tree-child network, and
which operations on $N$ are redundant or trivial. In the following we show that this only depends on
different substructures of $N$. We consider \SNPR operations first, showing which respect the \tchi
property, which are trivial and which are redundant. This then allows us to count the number of
neighbours. After that, we include \SNPRP and \SNPRM operations to consider the SNPR
neighbourhood.

\paragraph{Tree-child respecting \SNPR operations.} 
\pdfbookmark[2]{Tree-child respecting SNPR operations}{SNPRrespecting}
Let $\theta = (e, f) \in \Theta_{\SNPR}^{\tchinetsNoN}(N)$ and $e = (u, v)$. 
For $\theta$ to respect the tree-child property, neither pruning $(u,v)$ nor then regrafting $(.,v)$ to
$f$ can yield a non-leaf vertex without a tree child.
Roughly speaking and as we show in the following lemma, this implies that if $e$ is a
critical edge, then there are only limited options for $f$, and also that not both $e$ and $f$ can
be reticulation edges. \cref{fig:SNPR:tchiRespecting} illustrates the cases where $e$ is a critical
edge.

\begin{figure}[htb]
 \centering
 \includegraphics{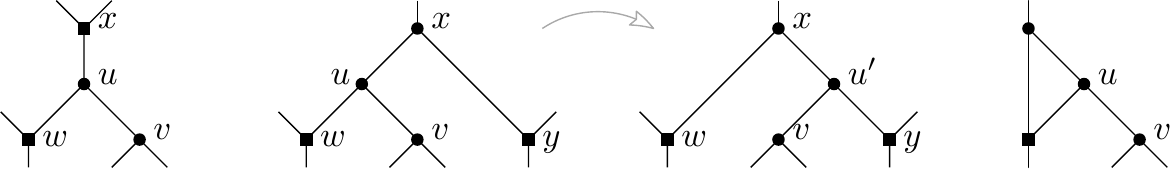}
 \caption{Illustration of the cases $r_2$, $r_3$, and triangle, where there are only trivial \SNPR
 operations that prune $(u, v)$ or in the case of $r_3$ exactly one non-trivial one as shown.
 This is formalised in \cref{lem:SNPR:tchiRespecting}}
 \label{fig:SNPR:tchiRespecting}
\end{figure}

\begin{lemma}\label{lem:SNPR:tchiRespecting} 
Let $N \in \tchinets$ and $(e, f) \in \Theta_{\SNPR}(N)$, $e = (u, v)$, $f = (x, y)$ not a
descendant of $e$.

	\noindent
Then $N' = (e, f)(N)$ is a tree-child network if and only if one of the following cases holds:
\begin{enumerate}[label=(\roman*)]
  \item $e$ is a reticulation edge and $f$ is not a reticulation edge; \label{c:tchiResp:retEdge}
  \item $e$ is a pure tree edge that is not critical.\label{c:tchiResp:normal}
  \item $e$ is a critical edge and $f$ is incident to $u$;\label{c:tchiResp:trivial}
  \item $e$ is a critical edge of an $r_3$ structure with underlying path $w, u, x, y$
  such that $u$ and $y$ are the children of $x$ and $f = (x, y)$; \label{c:tchiResp:rThree}
  \item $e$ is a critical edge of a triangle and $f$ is the long side of the triangle.
  \label{c:tchiResp:triangle}
\end{enumerate}
\end{lemma}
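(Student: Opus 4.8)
The plan is to prove both directions by carefully tracking the two local obstructions to the tree-child property: after pruning $e = (u,v)$ the vertex $u$ is suppressed, so we need the old parent/siblings of $u$ to retain a tree child; and after regrafting $(.,v)$ onto $f = (x,y)$ we create a new vertex $u'$ whose children are $v$ and $y$, so we need $u'$, and also $x$ (which now has $u'$ in place of $y$), to have a tree child. I would first record the elementary observation that the only vertices whose ``has a tree child'' status can change are: the parent of $u$ and the sibling of $v$ in $N$ (affected by the suppression), together with $x$ and the new vertex $u'$ (affected by the subdivision). Everything else is untouched, so $N'$ is tree-child iff all of these four vertices have a tree child in $N'$.

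For the regrafting side, $u'$ has children $v$ and $y$; since a reticulation is never a tree child, $u'$ has a tree child iff not both $v$ and $y$ are reticulations. As $y$ is a reticulation exactly when $f$ is a reticulation edge, and $v$ is a reticulation exactly when $e$ is a reticulation edge (recall $u$ is not a reticulation by the definition of a prunable edge, and in the tree-child setting $e$ cannot be a pure reticulation edge), this already forces: $e$ and $f$ are not both reticulation edges. The vertex $x$ keeps its other child unchanged and gains the tree vertex $u'$ as a child, so $x$ automatically has a tree child in $N'$. This disposes of all the regrafting constraints and, combined with the prune-side analysis, yields case \ref{c:tchiResp:retEdge} once the prune side is handled for a reticulation edge $e$, and narrows the pure-tree-edge subcase to understanding when pruning is harmless.

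For the pruning side, write $p$ for the parent of $u$ and $v'$ for the sibling of $v$ (so $u$ has children $v$ and $v'$). Suppressing $u$ makes $v'$ a child of $p$. The sibling $v'$ loses nothing, but now $p$'s child set becomes $\{v', \text{other child of } p\}$, which is fine for $p$ unless $v'$ is a reticulation and $p$'s other child is also a reticulation. When $e$ is a reticulation edge, $u$ is a tree vertex with a tree-vertex sibling child $v'$ (tree-child property applied at $u$), so the only worry is $p$'s other child; I would observe this can always be fixed by choosing $f$ appropriately and that in fact it never fails for reticulation edges because... — actually the cleanest route is: when $e$ is a reticulation edge, $v'$ is a tree vertex, so $p$ retains the tree child $v'$, no constraint arises on the prune side, and \ref{c:tchiResp:retEdge} follows. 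When $e$ is a pure tree edge, $v$ is a tree vertex and $v'$ is the ``critical'' sibling; the prune fails precisely when suppressing $u$ strands some reticulation without a tree child, and unwinding the definition this is exactly the situation where $e$ is the critical edge of an $r_2$, $r_3$, or triangle structure (the vertex playing the role of ``$u$'' in those structures is our $u$, and $v'$ is the reticulation child that used to rely on the path through $u$). So if $e$ is a pure tree edge that is not critical, pruning is always safe and we are in case \ref{c:tchiResp:normal}; if $e$ is critical, pruning strands a reticulation $w$, and the only way to repair this in $N'$ is for the regraft to re-supply $w$ with a tree parent — which forces $f$ to be incident to $u$ (case \ref{c:tchiResp:trivial}, giving a trivial operation), or, in the $r_3$ case, $f = (x,y)$ with $u,y$ the children of $x$ (case \ref{c:tchiResp:rThree}), or, in the triangle case, $f$ the long side (case \ref{c:tchiResp:triangle}); I would verify each of these repairs does restore the tree-child property and that no other choice of $f$ does. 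Cross-referencing \cref{fig:SNPR:tchiRespecting} here keeps the case analysis readable.

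The main obstacle I expect is the critical-edge case: showing that the listed options for $f$ are \emph{exactly} those that repair the stranded reticulation. This needs a short but genuine argument that regrafting $(.,v)$ anywhere else leaves $w$ (or $v'$) without a tree child — using that a tree-child network has no pure reticulation edges and that after the suppression the only candidate tree parent for the stranded reticulation must be created by the regraft, which pins down $f$. I would also take care with the acyclicity side condition ($f$ not a descendant of $e$), checking it is compatible with each listed case, and with the degenerate overlaps (a triangle is a special $r_3$; a tree-branching triangle is counted thrice) so that the five cases are genuinely exhaustive and their stated forms are the sharpest possible.
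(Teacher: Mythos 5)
Your proposal is correct and takes essentially the same route as the paper's proof: a local case analysis on the pruned edge $e$ (impure reticulation edge, non-critical pure tree edge, critical edge of an $r_2$, $r_3$ or triangle structure), based on the observation that only the parent of $u$, the vertex $x$ and the new vertex $u'$ can have their tree-child status affected, with the critical-edge repairs pinned down exactly as in cases (iii)--(v). One wording slip to fix when writing it out: the vertex stranded by pruning a critical edge is the parent $p$ of $u$ (a tree vertex in the $r_3$ and triangle cases, a reticulation only in the $r_2$ case), and the repair must re-supply $p$ with a tree child --- i.e.\ $f$ must be an outgoing edge of $p$ after the suppression --- rather than ``re-supply $w$ with a tree parent''; this corrected formulation is precisely what yields the listed options for $f$ and nothing else.
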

\begin{proof}
We prove this by considering the different types of $e = (u, v)$. By definition of an \SNPR
operation, $u$ can not be a reticulation. Thus, $e$ can not be a pure reticulation edge or an
impure tree edge. Let $e$ be an impure reticulation edge, i.e. let $v$ be a reticulation. Then,
since $N \in \tchinets$, the sibling $w$ of $v$ with shared parent $u$ is a tree vertex. Thus
after pruning $e$ and suppressing $u$, the parent of $u$ has in $N'$ the vertex $w$ as tree child.
Hence, a reticulation edge can always be pruned. Now, if $f = (x, y)$ is a reticulation edge, then
the new vertex $u'$ in $N'$, resulting from the subdivision of $f$, has the two children $v$ and
$y$, which are both reticulations. Thus, if $e$ is a reticulation edge, $f$ can not be a
reticulation edge. If $f$ is not a reticulation edge (\cref{c:tchiResp:retEdge}), then, in $N'$, the
new vertex $u'$ has the tree child $y$, the vertex $x$ has the tree child $u'$ and and all other
vertices stay unaffected.

Next, let $e$ be a tree edge. Clearly, $e$ is pure. 
If $e$ is not critical (\cref{c:tchiResp:normal}), then either the sibling of $v$ or the sibling of
$u$ is a tree vertex.
Without loss of generality let $w$, the sibling of $v$, be a tree vertex. Then, after
pruning $e$ and suppressing $u$, the parent $x$ of $u$ has $w$ as a tree child in $N'$. 
Since $v$ is a tree vertex, regrafting $(,v)$ to any edge $f$ does not create a non-leaf vertex
without tree child. Hence, $N'$ is tree child.

If $e$ is critical and $f$ incident to $e$ (\cref{c:tchiResp:trivial}), as $f$ is not a descendant
edge of $e$, then $N' = N$ and $N'$ is thus tree child.
If $e$ is the critical edge of an $r_2$ structure, then clearly $f$ being incident to $e$ is the
only option for $N'$ to be tree child. If $e$ is the critical edge of an $r_3$ structure, then after
pruning $e$ and suppressing $u$, the parent $x'$ of $u$ has the two reticulations $y'$ and $w$ as
children if and only if $e$ is not regrafted to an incident edge and if $f \neq (x', y')$
(\cref{c:tchiResp:rThree}).
In the case that the $r_3$ structure is a triangle, this yields that $f$ is the long side of the
triangle (\cref{c:tchiResp:triangle}). Since we covered all types of $e$, the described choices of
$e$ and $f$ cover all \tchi-respecting \SNPR operations.
\end{proof}

We now know when exactly an \SNPR operation respects the tree-child property. We can thus continue with
counting them. Let $E_R$ denote all reticulation edges, let $E_{T^*}$ denote all pure non-critical
tree edges and let $\delta_T$ be the restriction of the $\delta$ function that only counts
descendant edges that are tree edges.

\begin{lemma}\label{lem:tchi:SNPR:numOps}
Let $N \in \tchinets$.

	\noindent
Then the number of \tchi-respecting \SNPR operations on $N$ is
\begin{equation*}
\begin{split}
\abs{ \Theta_{\SNPR}^{\tchinetsNoN}(N) } 
 &= 4n^2 + 10nr - 2n(r_2 + r_3) - 6n + 2r^2 - 3r(r_2 + r_3) - 5r\\
 &\, + 4r_2 + 5r_3 + 2 - \sum\limits_{e \in E_{T^*}}\delta(e) - \sum\limits_{e \in E_R}
 \delta_T(e)\text{.}
\end{split}
\end{equation*}
\end{lemma}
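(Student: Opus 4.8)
The plan is to count $\abs{\Theta_{\SNPR}^{\tchinetsNoN}(N)}$ by summing, over all valid prune edges $e=(u,v)$, the number of valid regraft targets $f$, using the case analysis already established in \cref{lem:SNPR:tchiRespecting}. I would split the edges of $N$ into the relevant types: reticulation edges $E_R$, pure non-critical tree edges $E_{T^*}$, and critical edges (those coming from $r_2$, $r_3$ structures and triangles). For each type I compute the contribution separately and then add everything up, finally simplifying.

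First I would handle the reticulation edges (case \ref{c:tchiResp:retEdge}). There are $2r$ reticulation edges; but a reticulation edge $e$ is a valid prune edge only when $u$ is not a reticulation — since $N$ is tree-child, $e$ is never a pure reticulation edge, so all $2r$ reticulation edges qualify. For such an $e=(u,v)$, the valid targets $f$ are exactly the edges that are \emph{not} reticulation edges and \emph{not} descendant edges of $e$, and also $f\neq e$; I also need to exclude regrafts that recreate $N$ (handled later as trivial, but here we are counting operations, not neighbours, so I keep them — actually re-reading the statement, $\Theta$ counts operations, so trivial ones stay in). The count is $(\text{number of non-reticulation edges}) - (\text{non-reticulation descendant edges of }e) - [\text{adjust for }e\text{ itself}]$. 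The number of non-reticulation edges is $m - 2r = 2n+3r-1-2r = 2n+r-1$; subtracting the tree descendant edges of $e$, which is exactly $\delta_T(e)$, and noting $e$ itself is a reticulation edge so not double-counted, gives $2n+r-1-\delta_T(e)$ choices of $f$ per reticulation edge $e$. Summing over $E_R$ yields $2r(2n+r-1) - \sum_{e\in E_R}\delta_T(e)$.

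Next, the pure non-critical tree edges (case \ref{c:tchiResp:normal}): there are $\abs{E_{T^*}}$ of them, and for each such $e=(u,v)$ \emph{any} edge $f\neq e$ that is not a descendant edge of $e$ is a valid target. So the count per edge is $(m-1) - \delta(e) = (2n+3r-2) - \delta(e)$, giving $\abs{E_{T^*}}(2n+3r-2) - \sum_{e\in E_{T^*}}\delta(e)$. I would then express $\abs{E_{T^*}}$ in terms of $n,r$ and the structure counts: the pure tree edges are $m$ minus the impure edges and the pure reticulation edges, and the critical edges are exactly $r_2 + r_3$ in number (one per $r_2$ or $r_3$ structure, with tree-branching triangles already subsumed in the $r_3$ count — I need to double-check whether triangles with $y=w$ contribute a distinct critical edge or are counted inside $r_3$; the text says every tree-branching triangle is counted as an $r_3$ structure, so the critical edges number $r_2+r_3$). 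Counting impure edges: each reticulation $w$ has two incoming edges; an incoming edge $(p,w)$ is impure iff $p$ is a tree vertex, and pure iff $p$ is a reticulation; the number of pure reticulation edges is therefore the number of reticulation-reticulation edges, which I'd relate to $r_2$-type adjacencies — this bookkeeping is the fiddly part. Cleaner: total tree edges $= 2n+r-1$ (each tree vertex except the root contributes… actually every edge into a tree vertex is a tree edge; there are $n$ leaves, $n-1+?$ inner tree vertices — use that tree edges $= m - (\text{reticulation edges}) = 2n+3r-1-2r = 2n+r-1$). Among these, the impure tree edges are those $(u,v)$ with $u$ a reticulation, $v$ a tree vertex: there are exactly $r$ of them (one out-edge per reticulation, and its target is a tree vertex unless that out-edge is a pure reticulation edge; so impure tree edges $= r - (\text{number of reticulations whose child is a reticulation})$). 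So I must introduce the quantity $p :=$ number of reticulations whose out-edge goes to a reticulation; note $p$ equals the number of $r_3$-structures of the special "stacked" type plus $r_2$-structures... this is precisely where I expect the main obstacle: correctly relating the number of pure reticulation edges and impure tree edges to the structure counts $r_2, r_3$ so that everything collapses into the stated closed form. I would resolve it by observing that a pure reticulation edge $(u,w)$ has $u$ a reticulation with reticulation child $w$, and tracing $u$'s parent: such a configuration is exactly an $r_2$ structure (path reticulation–$u$–$w$ with $u$ a tree vertex) — wait, in an $r_2$ structure $u$ is a tree vertex, contradiction — so pure reticulation edges correspond to the configuration in $r_3$ where... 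I'd carefully reconcile this using the definitions and the figure before committing.

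Finally, I add the critical-edge contributions: by \cref{lem:SNPR:tchiRespecting}, a critical edge of an $r_2$ structure admits only targets $f$ incident to $u$ (case \ref{c:tchiResp:trivial}); there are a bounded constant number of such $f$ (the edges incident to $u$, excluding descendant edges of $e$ and $e$ itself) — I'd count these precisely (e.g. the two edges into $u$'s two parents... no, $u$ has one parent; edges incident to $u$ are: the edge into $u$, and $u$'s two out-edges one of which is $e$; so roughly $2$ choices), giving a constant times $r_2$. A critical edge of an $r_3$ structure admits the incident-to-$u$ targets \emph{plus} one extra target $f=(x,y)$ (case \ref{c:tchiResp:rThree}), giving (that constant $+1$) times $r_3$. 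A critical edge of a triangle similarly gets the incident targets plus the long side; but triangles are inside the $r_3$ count, so I must be careful not to double count — I think the cleanest route is: the $r_3$ count already includes triangles, and case \ref{c:tchiResp:triangle} is the specialization of case \ref{c:tchiResp:rThree} where $y=w$, so the triangle long side \emph{is} the edge $(x,y)$ of case \ref{c:tchiResp:rThree} — no new separate count needed. After assembling $2r(2n+r-1) - \sum_{E_R}\delta_T(e) \;+\; \abs{E_{T^*}}(2n+3r-2) - \sum_{E_{T^*}}\delta(e) \;+\; (\text{const}\cdot r_2) \;+\; (\text{const}\cdot r_3)$ and substituting $\abs{E_{T^*}} = 2n+r-1-r-(r_2+r_3) + (\text{pure-reticulation-edge correction})$, I expand and collect terms in $n^2, nr, n, r^2, r, r_2, r_3$ and the constant, and verify it matches the stated expression; the coefficient $4n^2$ forces $\abs{E_{T^*}}$ to contribute the dominant $2n\cdot 2n$ (together with the $2r\cdot 2n$ from $E_R$ giving the $10nr$ cross term after combining with $2r\cdot 3r = 6r^2$... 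I'd track the $r^2$ coefficient $2$ carefully). The main obstacle, to restate, is the structural bookkeeping of pure vs. impure edges in terms of $r_2$ and $r_3$ and making the constants in the critical-edge counts exactly produce the $+4r_2 + 5r_3 + 2$ tail; I would pin these down by a small worked example (e.g. the network $N$ in \cref{fig:tchi:neighbourhood}) as a sanity check before finalizing.
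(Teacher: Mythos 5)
Your plan follows the same decomposition as the paper's proof: split the prunable edges into reticulation edges, pure non-critical tree edges, and critical edges, count the admissible regraft targets for each via \cref{lem:SNPR:tchiRespecting}, and sum. The reticulation-edge contribution $2r(m-2r)-\sum_{e\in E_R}\delta_T(e)$ and the per-edge count $(m-1)-\delta(e)$ for $e\in E_{T^*}$ match the paper exactly, and your handling of triangles as already subsumed in the $r_3$ count (so that \cref{c:tchiResp:triangle} is just the specialisation of \cref{c:tchiResp:rThree} with $y=w$) is also the paper's reading.

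The genuine gap is the point you yourself flag as ``the main obstacle'': the determination of $\abs{E_{T^*}}$. You introduce a parameter $p$ for reticulations whose child is a reticulation and then try to relate pure reticulation edges to $r_2$ or $r_3$ configurations, ending unresolved. But this bookkeeping dissolves entirely under the tree-child hypothesis: a reticulation has out-degree one, so if its unique child were a reticulation it would have no tree-vertex child, contradicting tree-child. Hence there are \emph{no} pure reticulation edges (a fact you already invoked when arguing that all $2r$ reticulation edges are prunable, but then failed to apply here), every reticulation contributes exactly one impure tree edge, and consequently $\abs{E_{T^*}} = m - 2r - r - (r_2+r_3) = m - 3r - r_2 - r_3 = 2n-1-r_2-r_3$, with no parameter $p$ and no correction term. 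Similarly, you leave the critical-edge constants as ``roughly $2$'': they are exactly $2$ per $r_2$ structure (the two edges incident to $u$ other than $e$, by \cref{c:tchiResp:trivial}) and exactly $3$ per $r_3$ structure (those two plus the edge $(x,y)$ of \cref{c:tchiResp:rThree}, the long side in the triangle case), giving the $2r_2+3r_3$ term. With these two points settled, your assembly $2r(m-2r)-\sum_{E_R}\delta_T(e) + (m-3r-r_2-r_3)(m-1)-\sum_{E_{T^*}}\delta(e) + 2r_2+3r_3$ expands to the stated formula, which is precisely the paper's computation; as written, however, the proposal does not close these steps and even entertains a nonexistent configuration (reticulation--reticulation edges) in a tree-child network.
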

\begin{proof}
	Following \cref{lem:SNPR:tchiRespecting}, we prove this by distinguishing different types of
	the pruned edge $e$. We use the fact that $N$ has $m = 2n + 3r - 1$ edges.
	First, any reticulation edge $e = (u, v)$ can be regrafted to any
	non-reticulation edge that is not descendant of $e$. 
	Hence, there are the following many such operations:
	\begin{equation}\label{eq:SNPR:tchiRespecting:retEdge}
	2r(m - 2r) - \sum_{e \in E_R} \delta_T(e) = 4nr + 2r^2 - 2r -
	\sum_{e \in E_R} \delta_T(e)
	\end{equation}
	\cref{eq:SNPR:tchiRespecting:retEdge} uses $\delta_T(e)$ instead of
	$\delta(e)$, since we would otherwise double count the forbidden operations of regrafting to an
	edge that is reticulation edge and descendant of the pruned edge. 
	
	If $e \in E_{T^*}$, i.e. a pure non-critical tree edge, then
	$e$ can be pruned and regrafted to every edge not $e$ itself or a descendant of $e$. 
 	Hence, there are the following many such operations:
	\begin{multline}\label{eq:SNPR:tchiRespecting:nonCritEdge}
	 (m - 3r - r_2 - r_3)(m - 1) - \sum_{e \in E_{T^*}} \delta(e)\\
 	= 4n^2 + 6nr - 2n(r_2 + r_3) - 6n - 3r(r_2 + r_3 + 1) + 2r_2 + 2r_3 + 2 -
 	\sum_{e \in E_{T^*}} \delta(e)
	\end{multline} 	
	
	If $e$ is the critical edge of an $r_2$ or $r_3$ structure (including triangles), then there
	are only $2$ or $3$ operations, respectively. Hence, there are the following many such
	operations:
	\begin{equation}\label{eq:SNPR:tchiRespecting:critEdge}
	2r_2 + 3r_3
	\end{equation}
 	Adding
 	\cref{eq:SNPR:tchiRespecting:retEdge,eq:SNPR:tchiRespecting:nonCritEdge,eq:SNPR:tchiRespecting:critEdge}
 	together, the lemma follows.
\end{proof}

\paragraph{Trivial \SNPR operations.}
\pdfbookmark[2]{Trivial SNPR operations}{SNPRtrivial}  
Pruning an edge and regrafting it at the same edge is a trivial \SNPR operation.
Another trivial \SNPR operation $(e, f)$ arises for every triangle where $e$ is its critical edge
and $f$ is its long side. Furthermore, the reticulation edges of a triangle induce a trivial
operation each, as the proof of the following lemma shows.

\begin{lemma}\label{lem:tchi:SNPR:trivial}
Let $N \in \tchinets$. 

	\noindent
Then there are $4n + 4r + 3\trias - 4$ trivial operations in
$\Theta_{\SNPR}^{\tchinetsNoN}(N)$.
\end{lemma}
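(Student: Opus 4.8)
The plan is to enumerate the trivial \SNPR operations $\theta = (e,f)$ with $\theta(N) = N$ by going through the possible types of the pruned edge $e$, mirroring the case analysis of \cref{lem:SNPR:tchiRespecting}. The obvious family is $f = e$: pruning an edge and regrafting it at itself leaves $N$ unchanged. There are $m = 2n + 3r - 1$ choices of $e$, but $e$ is not allowed to be a pure reticulation edge (its tail would be a reticulation) — actually the constraint is only that the tail $u$ of $e = (u,v)$ is not a reticulation, which excludes exactly the pure reticulation edges. A tree-child network has no pure reticulation edge issue in the sense that each reticulation's single out-edge goes to a tree vertex only when that child is a tree vertex; I will need to count how many edges have a reticulation as tail, namely the $r$ out-edges of reticulations, and subtract those that cannot serve as $e$.

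Next I would handle the triangle-induced trivial operations. By \cref{lem:SNPR:tchiRespecting}\ref{c:tchiResp:triangle}, if $e$ is the critical edge of a triangle and $f$ is the long side, then $N' = N$; this gives $\trias$ trivial operations, one per triangle, in addition to the $f=e$ one already counted for that critical edge. Then I would examine the reticulation edges of a triangle: for a triangle on vertices $x,u,w$ with top side $(x,u)$, long side $(x,w)$, bottom side $(u,w)$, the two reticulation edges ending at $w$ are the long side and the bottom side; pruning one of them and regrafting to a suitable incident edge reconstructs the same triangle, hence is trivial. I expect each triangle to contribute some fixed number of such reticulation-edge trivial operations (the figure \cref{fig:SNPR:tchiRespecting} and the surrounding discussion suggest the count, likely contributing to the $3\trias$ term together with the long-side operation). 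The key is to be careful not to double count the $f = e$ operations on these same edges.

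Finally I would assemble the total. The $f = e$ family contributes $m$ minus the number of disallowed pruned edges; writing $m = 2n + 3r - 1$ and subtracting appropriately should produce a linear expression in $n$ and $r$. Adding the triangle contributions $3\trias$ (long-side plus the two reticulation-edge operations per triangle, net of overlaps) should land on $4n + 4r + 3\trias - 4$. The main obstacle, and the step I would do most carefully, is getting the constant and the coefficient bookkeeping exactly right: precisely which edges are excluded as choices of $e$ in the $f=e$ family (reticulation out-edges, and possibly the root edge or leaf-incident edges have to be reconsidered since suppressing $u$ must be well-defined), and making sure the triangle-related trivial operations are neither omitted nor double-counted against the $f=e$ operations on the same edges. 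I would cross-check the final formula against a small example such as the tree-child network in \cref{fig:tchi:neighbourhood} (and against the $r = 0$ case, where it should reduce to the known count of trivial rSPR operations on a \phytree).
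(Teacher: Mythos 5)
There is a genuine gap in your main family of trivial operations. Under the paper's definition, an \SNPR operation $(e,f)$ requires $f \neq e$ (the case $f = e$ is reserved for \SNPRP), so ``prune $e$ and regraft it at itself'' is not a legal operation to count. The correct ``re-insert in place'' family consists of the operations $(e,f)$ where $f$ is incident to $e$ at its tail $u$, i.e.\ $f$ is the parent edge of $e$ or the sibling edge of $e$: after pruning $e=(u,v)$ these two edges merge under the suppression of $u$, and regrafting to either of them restores $N$. Crucially this gives \emph{two} trivial operations per prunable edge, and with $m-r-1$ prunable edges (excluding the root edge and the $r$ edges with a reticulation as tail) one gets $2(m-r-1)=4n+4r-4$. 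Your family yields only one operation per prunable edge, i.e.\ roughly $2n+2r-2$, so no amount of careful bookkeeping of excluded edges can reach the stated coefficient; the factor of two coming from the two admissible choices of $f$ is the missing idea.

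A second gap is exhaustiveness. Besides the two triangle cases, you never argue that no other trivial operations exist; the paper's proof hinges on \cref{thm:tchi:oneAutomorphism} (no isomorphic edges in a tree-child network) to show that any trivial operation not in the ``incident'' family must either create a parallel edge (pruning the critical edge of a triangle and regrafting to its long side, giving $\trias$ operations) or swap the two reticulation edges of a triangle (giving $2\trias$ operations: prune the long side and regraft to the critical edge, and the analogous operation for the bottom side). Note also that your description of the triangle reticulation-edge case --- ``pruning one of them and regrafting to a suitable \emph{incident} edge'' --- actually describes operations already contained in the main family (the targets incident to the pruned edge's tail), so as written you would double count those while missing the genuinely new ones, whose regraft targets are \emph{not} incident to the pruned edge's tail. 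Without the uniqueness argument and the correct identification of these $3\trias$ extra operations, the enumeration is neither complete nor disjoint.
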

\begin{proof}
	Let $(e,f) \in \Theta_{\SNPR}^{\tchinetsNoN}(N)$ with $e = (u, v)$ and $f = (x, y)$. An operation
	$(e,f)$ can be trivial in three ways. First, $f$ is incident to $e$ at $u$.
	The root edge and edges $(u, v)$ with a reticulation $u$ are not prunable.
	Therefore, there are $m - r - 1$ prunable edges and $2(m - r - 1)$ trivial \tchi-respecting
	\SNPR operations.
	
	Second, $f$ is isomorphic to the edge $g$ created by pruning $e$ and suppressing $u$.
	However, this can only happen if $f$ and $g$ are parallel edges, since by
	\cref{thm:tchi:oneAutomorphism} there are no pairs of isomorphic edges in a tree-child network. This
	means that the critical edge of a triangle gets pruned. Thus, there are $\trias$ many trivial \SNPR
	operations of that type.
  	
  	Third, let $f$ be neither of the above. Let the edges of $N$ be labelled and then in $N' = (e,
  	f)(N)$ let all labels be as in $N$ except those affected $(e, f)$. Let the
  	regrafted edge have the label $e$. Then, since $N' = N$, there has to be an edge $e' = (u', v')$
  	in $N'$ that is, without label, the same edge as $e$ in $N$. By the choice of $f$, this can not
  	be $e$. The edges $e$ and $e'$ got, so to say, swapped. Then, since by
  	\cref{thm:tchi:oneAutomorphism} every vertex is unique, $v = v'$ follows.
  	The edges $e$ and $e'$ are thus reticulation edges. For $N' = N$, clearly, $e$ and $e'$ have to be
  	the reticulation edges of a triangle: If we prune the long side of a triangle and regraft it to
  	the critical edge of the triangle, it results again in $N$. An equivalent operation exists for
  	the bottom edge of the triangle. Hence, there are $2\trias$ such trivial \SNPR operations.
  	
	Furthermore, these three cases do not overlap and we thus counted all trivial \SNPR operations
	on $N$. Since $2(m - r - 1) + 3\trias = 4n + 4r + 3\trias - 4$, the lemma follows.
\end{proof}

\paragraph{Redundant \SNPR operations.}
\pdfbookmark[2]{Redundant SNPR operations}{SNPRredundancy}  
We now consider when and how non-trivial \SNPR operation on tree-child networks can be redundant. 
Humphries and Wu~\cite{HW13} used NNI operations to count redundancies of SPR and TBR
operations on unrooted trees. The following lemma states how \SNPR and \NNI correspond to each other with regards
to redundancy on rooted trees.

\begin{lemma}\label{lem:threeSPRoneNNI}
Let $T \in \trees$, and let $\theta, \theta' \in \Theta_{\SNPR}^{\treesNoN}(T)$, $\theta
\neq \theta'$ be redundant with $\theta(T) = \theta'(T) = T' \neq T$.

	\noindent
Then there exists an \NNI operation $\sigma \in \Theta_{\NNI}^{\treesNoN}(T)$ such that
$\sigma(T) = T'$.
Furthermore, every redundancy set of $\Theta_{\SNPR}^{\treesNoN}(T)$ has size three.
\end{lemma}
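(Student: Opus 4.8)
The plan is to analyse what an \SNPR operation on a \phytree $T$ actually does and show that its effect is always achievable by an \NNI operation whenever two distinct such operations coincide. First I would note that on a tree every \SNPR operation is an \rSPR move: prune an edge $e = (u,v)$ (so $u$ gets suppressed, leaving a half edge at $v$) and regraft $(.,v)$ to some edge $f = (x,y)$, creating a subdivision vertex $u'$. The key structural observation is that the resulting tree $T'$ differs from $T$ only ``locally'' precisely when the regraft target $f$ is close to the original position of $e$; more carefully, if $T' = (e,f)(T)$ with $T' \neq T$, then in $T$ the subtree hanging below $v$ has been moved, and the parent $u$ of $v$ has been replaced by a new parent on $f$. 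So I would first identify, given a redundancy $\theta(T) = \theta'(T) = T'$, what the common tree $T'$ looks like and read off from it which subtree got moved and to where.

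The main step is a case analysis showing that two distinct \SNPR operations producing the same $T'$ force the moved subtree to be relocated only ``one step'' — i.e. the pruned edge's parent $u$ in $T$ is a neighbour (in the undirected sense) of the regraft vertex $u'$ in $T'$ — which is exactly the situation captured by \NNI. Concretely: suppose $\theta = (e_1, f_1)$ and $\theta' = (e_2, f_2)$ are both non-trivial and redundant. Since in $T'$ each non-root edge is uniquely identifiable (this is \cref{thm:tchi:oneAutomorphism} applied to trees, which have no symmetries), I can track which edges of $T'$ correspond to which edges of $T$ outside the affected region. Comparing the two operations, the subtree that $\theta$ moves and the subtree that $\theta'$ moves must be ``complementary'' around a single degree-3 vertex of $T'$: there is a vertex $p$ of $T'$ with three incident edges, and $\theta$ realises $T'$ from $T$ by pruning one pendant direction and $\theta'$ by pruning another, so the local picture at $p$ is precisely an \NNI around the middle edge at $p$. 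From this I extract the \NNI operation $\sigma$ (identify its axis as the inner edge of $T$ that sits ``between'' the two configurations) with $\sigma(T) = T'$, and conversely observe that every \NNI move on $T$ arises as an \SNPR move (stated already in the preliminaries: \NNI operations are special cases of \SNPR operations). Hence an \NNI-realisable $T'$ sits in a redundancy set.

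For the size-three claim I would argue: an inner edge $e = (a,b)$ of a \phytree, where $a$ has the other child $a'$ and $b$ has children $b_1, b_2$, admits exactly two non-trivial \NNI moves around axis $e$ (swap $a'$ with $b_1$, or swap $a'$ with $b_2$), and the two resulting trees are distinct from each other and from $T$. Each such \NNI-target $T'$ is reached by exactly the \SNPR operations that move one of the three relevant pendant subtrees (the subtree at $a'$, at $b_1$, or at $b_2$) the appropriate one step; counting these gives exactly three distinct \SNPR operations mapping $T \to T'$, and by the first part these are \emph{all} the \SNPR operations mapping $T$ to that $T'$. So every redundancy set has size exactly three. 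I would phrase the three operations explicitly: if the \NNI move swaps $a'$ and $b_1$, then $T'$ is obtained by (1) pruning the edge above $a'$ and regrafting it onto the edge $(b, b_1)$; (2) pruning the edge above $b_1$ and regrafting it onto the edge above $a'$; and a third operation (3) pruning the edge $(a,b)$ itself and regrafting it appropriately — I would verify each yields the same $T'$ and that no fourth \SNPR operation does.

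The hard part will be the bookkeeping in the case analysis: making precise the claim that redundancy forces the ``one-step'' local configuration, since a priori two very different-looking \SNPR moves might conceivably produce the same tree, and ruling this out cleanly requires carefully using the unique-identifiability of edges in $T'$ (via \cref{thm:tchi:oneAutomorphism}) to pin down the bijection between $E(T) \setminus \{\text{affected}\}$ and $E(T') \setminus \{\text{affected}\}$ and then showing the ``affected'' regions of the two operations must coincide up to the \NNI swap. I also need to handle the edge cases where some of the relevant subtrees are single leaves or where $e$ is incident to the root edge, confirming the count of three is unaffected. Once the local picture is established, extracting $\sigma$ and the count of three is routine.
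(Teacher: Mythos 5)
Your overall strategy (reduce a redundancy to a local picture around one inner edge, using that trees have no non-trivial leaf-fixing automorphisms, then exhibit the three \SNPR operations realising each \NNI neighbour) is close in spirit to the paper's, but your explicit identification of the third operation is wrong, and this is not a cosmetic slip. With axis $e=(a,b)$, second child $a'$ of $a$ and children $b_1,b_2$ of $b$, and with $T'$ the tree in which $a'$ and $b_1$ are siblings (the tree produced by your operations (1) and (2)), your operation (3) prunes the axis $(a,b)$ itself. Pruning $(a,b)$ detaches the entire subtree rooted at $b$, inside which $b_1$ and $b_2$ remain siblings wherever the half edge $(.,b)$ is regrafted; in $T'$, however, the sibling of $b_1$ is $a'$, so by the uniqueness of vertices (\cref{thm:tchi:oneAutomorphism} restricted to trees) no regraft of $(.,b)$ can yield $T'$. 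The correct third member of the redundancy set prunes the edge above the \emph{third} pendant subtree, namely $(b,b_2)$, and regrafts it to the parent edge $(p,a)$ of the axis --- the operation $((u,w),(p,x))$ in the paper's notation (\cref{fig:SNPR:redundancy:pureTreeEdge}). Your earlier sentence about ``moving one of the three pendant subtrees at $a'$, $b_1$ or $b_2$'' already describes the right set, so your list (3) contradicts your own description; had you carried out the promised verification you would have found only two of your three operations reach $T'$, leaving the size-three claim unestablished.

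Beyond that, the step you yourself flag as ``the hard part'' --- that two distinct non-trivial \SNPR operations with the same image force the one-step \NNI configuration, equivalently that no fourth operation maps $T$ to an \NNI neighbour --- is precisely what the lemma needs, and your proposal only promises a case analysis by tracking unaffected edges, without an organising principle that makes the analysis finite and exhaustive. The paper supplies such a principle: fix two vertices $v,y$ that are not siblings in $T$ but become siblings in $T'$ (such a pair exists whenever $T'\neq T$, e.g.\ the pruned vertex and its new sibling), and enumerate in \cref{lem:makeSiblings} the only ways a single prune-and-regraft can create this relation; on a tree these possibilities collapse to exactly three operations, which yields both the \NNI correspondence and the count simultaneously. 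You need either that device or an equally explicit exhaustive argument; as written, the existence of $\sigma$ and the impossibility of a fourth redundant operation remain assertions rather than proved statements.
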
 

We prove \cref{lem:threeSPRoneNNI}, after we generalise observations on how an \SNPR redundancy
can occur. \cref{fig:SNPR:redundancy:pureTreeEdge} illustrates how, in the lemma, three \SNPR
operations correlate to an \NNI operation. There, in the subgraph of $T$, the axis $(x, u)$ of the
\NNI operation is a pure inner tree edge. We observe that, therefore, such an \NNI operation can
also induce a redundancy in a \phynet. The edge $(x, u)$ further has the siblings $v$ and $w$ as children
of $u$ and their uncle $y$ as child of $x$. Now, the three redundant \SNPR operations could be
described as follows. First, $((u, v), (x, y))$ prunes $v$ and regrafts it as sibling of $y$.
Second, $((x, y), (u, v))$ prunes $y$ and regrafts it as sibling of $v$. Third,  $((u, w), (p, x))$
prunes $w$ and regrafts it above $x$, thus makes $v$ and $y$ siblings. 
In general, to find redundancies of \SNPR operations, we can fix two vertices that stand in a
certain relation in $N'$, but not yet in $N$. Then, to create this relation, say making $v$ and $y$
siblings, we can either regraft one as sibling of the other or alter the path between them. 
We formalise this with the following lemma, after we precisely describe the initial situation.

\begin{figure}[htb]
 \centering
 \includegraphics{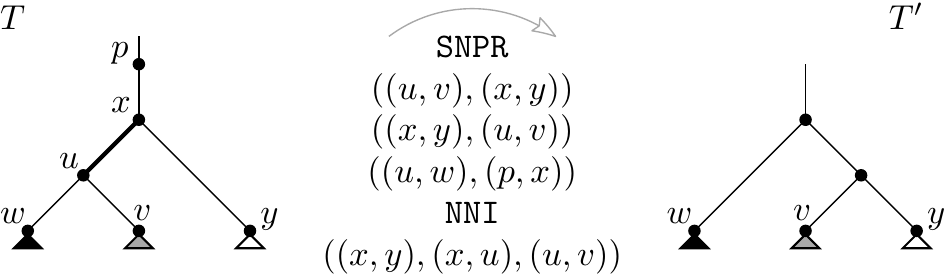}
 \caption{Correlation of an \NNI operation with a pure inner tree edge as axis and three \SNPR
 operation, all being pairwise redundant.}
 \label{fig:SNPR:redundancy:pureTreeEdge}
\end{figure}

Let $N, N' \in \tchinets$ be neighbours with $N' = \theta(N), \theta \in
\Theta_{\SNPR}^{\tchinetsNoN}(N)$. Let the vertices in both $N$ and $N'$ be labelled and let
$\theta$ preserve these labels, except, of course for removed or new vertices. 
Let now $v$ and $y$ be distinct vertices with the same labels, and such that neither is ancestor of
the other in both $N$ and $N'$. 
We now say that $v$ and $y$ are in a \emph{desired relation} if one of the following holds:
\begin{itemize}
  \item The vertex $v$ is a sibling, an uncle or a nephew of $y$ in $N'$ via a path $P'$, but $v$ is
  in a different relation to $y$ in $N$.
  \item The vertex $v$ is an uncle or a nephew of $y$ in $N'$ via a path $P'$ and in $N$ via a path
  $P \neq P'$.
\end{itemize}
In the second condition, $P \neq P'$ means that the labels of the vertices on $P$ differ for a least
one vertex from the labels of the vertices on $P'$.

\begin{lemma}\label{lem:makeSiblings} 
Let $N, N' \in \tchinets$ and $\theta \in \Theta_{\OPSNPR}^{\tchinetsNoN}(N)$ with $\theta(N) =
N' \neq N$. Let $v$ and $y$ be in a desired relation via the path $P'$ in $N'$.

	\noindent
Then there are only the following possibilities of how $\theta$ operates on $N$ to yield $N'$: 
\begin{enumerate}[label=(\roman*)]
  \item an incoming edge of $v$ or $y$ gets pruned and regrafted such that $v$ becomes 
  sibling, uncle or nephew, respectively, of $y$;
  \item an incoming edge of the parent of $v$ or $y$ gets pruned and regrafted such that $v$
  becomes uncle or nephew, respectively, of $y$;
  \item an edge $e = (u, w)$ with $u$, but not $w$, being on a path connecting $v$ and $y$, gets
  pruned yielding $P'$;
  \item an edge gets regrafted to a path connecting $v$ and $y$ yielding $P'$.
\end{enumerate}
\end{lemma}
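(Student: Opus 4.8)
The plan is to argue by a case analysis on how the single SNPR operation $\theta$ can possibly create the desired relation between $v$ and $y$. Since $\theta$ is one SNPR operation, it modifies $N$ in a very local way: it either prunes a single (non-reticulation-headed) edge and regrafts it elsewhere (\SNPR), or subdivides one or two edges and adds a new edge between the subdivision vertices (\SNPRP), or deletes a reticulation edge and suppresses its endpoints (\SNPRM). In every case, outside a bounded neighbourhood of the edges named by $\theta$, the vertex labels and the adjacency structure of $N'$ coincide with those of $N$. The first step is therefore to fix, in $N'$, the path $P'$ realising the desired relation (of length one, two, or three, according to whether $v$ is a sibling, uncle/nephew, or — in the ``parent'' variant — further up), and to observe that at least one edge of $P'$, or at least one vertex-incidence along $P'$, must \emph{differ} from the situation in $N$; otherwise the same relation via the same path would already hold in $N$, contradicting the hypothesis that $v,y$ are in a desired relation (recall the two bullet conditions: either the relation itself is new, or it is realised via a genuinely different path $P \neq P'$). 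So $\theta$ must ``touch'' $P'$.

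Next I would enumerate the ways $\theta$ can touch $P'$. Because $v,y$ are not ancestrally related in either network, $P'$ is an undirected path that goes up from $v$ to a common point and down to $y$ (length one to three in the cases considered). An SNPR operation changes the set of edges by: (a) deleting the pruned half-edge's attachment and creating a new attachment at the regraft target; or (b) adding one new edge and subdividing one or two existing edges; or (c) deleting a reticulation edge and suppressing two vertices (which merges two edges into one). In each sub-case I would ask: which edge of $P'$ in $N'$ is the ``new'' or ``changed'' edge? If the changed edge is the one incident to $v$ (resp.\ $y$) itself, then $\theta$ pruned and regrafted an incoming edge of $v$ (resp.\ $y$), landing it next to $y$ (resp.\ $v$) — this is case (i). If the changed edge lies one step further from $v$ or $y$ — i.e.\ it is an incoming edge of the \emph{parent} of $v$ or of $y$ — then $\theta$ moved that parent, making $v$ an uncle or nephew of $y$; this is case (ii). If instead the changed edge lies strictly in the ``interior'' of the connecting path (so neither endpoint of the changed edge is $v$, $y$, or an immediate parent targeted as above), then either $\theta$ removed an edge from a connecting path present in $N$ (a prune that deletes the vertex $u$ and re-routes the path, shortening it to $P'$) — case (iii) — or $\theta$ added an edge onto a path present in $N$ (a regraft, or an \SNPRP subdivision-plus-new-edge, that lengthens or re-routes it to $P'$) — case (iv). A small point to check is that \SNPRM (deletion of a reticulation edge) falls under case (iii): suppressing the two endpoints of the deleted edge merges adjacent edges, and if one of those merges occurs along a path between $v$ and $y$, the net effect is exactly ``an edge $e=(u,w)$ with $u$ but not $w$ on a connecting path gets pruned''. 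I would also note that \SNPRP adds a new edge whose tail subdivides one old edge and whose head subdivides another (or the same) old edge; the cases where that new edge or one of the two subdivisions sits on $P'$ again fall into (i)/(ii) (if at $v$, $y$, or a parent) or (iv) (if interior).

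The remaining work is bookkeeping: confirm that the four listed cases are exhaustive by checking that every position along $P'$ at which $\theta$'s modification can occur has been assigned to one of (i)–(iv), using that $P'$ has length at most three in the sibling/uncle/nephew cases (so ``incident to $v$ or $y$'', ``incident to a parent of $v$ or $y$'', and ``interior'' cover all edges of $P'$), and that $\theta$ changes only $O(1)$ edges near the named edges, so it cannot simultaneously be responsible for the relation in two incompatible ways. The main obstacle I anticipate is the \SNPRP/\SNPRM accounting — making sure the subdivision vertices and the suppressed vertices are correctly identified as lying ``on'' or ``off'' the connecting path, and that the degenerate sub-case $e=f$ of \SNPRP (subdividing one edge twice) does not create an escape route not covered by (i)–(iv). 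Once those degeneracies are handled, each case reduces to reading off which edge of $P'$ is new and matching it to the corresponding item in the list, and the lemma follows.
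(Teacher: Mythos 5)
Your proposal is correct and takes essentially the same route as the paper's own (much briefer) proof: since a single SNPR operation is local and $v$, $y$ survive it, the operation must either alter an existing path between $v$ and $y$ by one vertex (cases (iii)/(iv)) or prune and regraft an edge incident to $v$, $y$, or one of their parents to create $P'$ (cases (i)/(ii)). Your extra bookkeeping for \SNPRP/\SNPRM and for which edge of $P'$ is the changed one is a more detailed rendering of the same exhaustive classification.
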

\begin{proof}
	The existence of $v$ and $y$ in $N'$ after applying $\theta$ to $N$ means that
	$\theta$ does not prune an outgoing edge of $v$ or $y$.
	For $\theta$ to yield the desired relation and path $P'$ in $N'$, $\theta$ can either alter
	an existing path between $v$ and $y$ by one vertex, i.e. (iii) or (iv), or prune an edge of an
	existing path between $v$ and $y$ and regraft it such that a desired path $P'$ gets created, i.e.
	(i) or (ii).
\end{proof}

Applying \cref{lem:makeSiblings} means that we can consider an \SNPR operation $\theta$, find two
vertices $v$ and $y$ in the resulting network $N'$ that are in desired relation, and then check
whether other \SNPR operations corresponding to one of the possibilities listed in the lemma exist
that are redundant to $\theta$. We can now prove \cref{lem:threeSPRoneNNI}.

\begin{proofof}[\cref{lem:threeSPRoneNNI}]
The lemma states that if two non-trivial \SNPR operations $\theta$ and $\theta'$ are redundant on a
\phytree, that there is an \NNI operation that is redundant to them. 
Let $v$ and $y$ be two distinct vertices of $T$ that are not siblings, but that are under preserving
of labels by $\theta$ siblings in $T'$. Then $v$ and $y$ are in a desired relation. 
By \cref{lem:makeSiblings} follows then that $\theta$ corresponds to an $\NNI$ operation
and, moreover, that $\theta'$ is one of two other \SNPR operations redundant to $\theta$.
Hence, the redundancy set containing $\theta$ has size three.
\end{proofof}

We now count the number of \tchi-respecting \SNPR operations that we can discard due to
redundancy. In the proof for the following proposition, we will see that redundancies only arise
from a few different sources, like \NNI operations with the axis being a pure inner tree edge.
The other sources of redundancies are operations that create a triangle, the reticulation edges
of a triangle (see \cref{fig:tchi:SNPR:redundancy:triangles}), and the existence of tree-branching
triangles, diamonds and $\trapis$ trapezoids (see
\cref{fig:tchi:SNPR:redundancy:triangleDiamond,fig:tchi:SNPR:redundancy:triangleTrapezoid,fig:tchi:SNPR:redundancy:diamondTrapezoid}).
We note that an \NNI operation on a \phynet with the axis being a reticulation edge or an impure
tree edge does not correspond to a redundancy set of \SNPR operations. 

\begin{figure}[htb]
\begin{center}
  \includegraphics{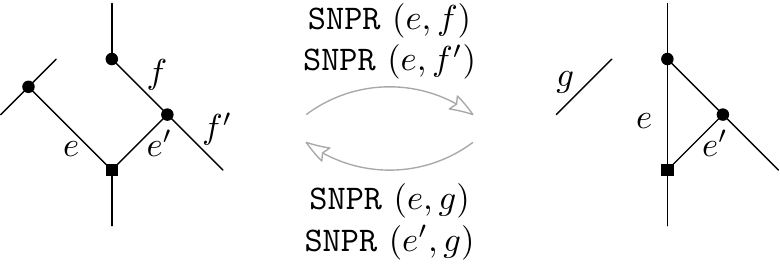}
  \caption{Redundancies due to the creation of a triangle and the reticulation edges of a triangle
  with listed redundant \SNPR operations.}
  \label{fig:tchi:SNPR:redundancy:triangles}
\end{center}
\end{figure}

\begin{proposition}\label{lem:tchi:SNPR:overcountingOps}
Let $N \in \tchinets$.

	\noindent
Then the number of non-trivial redundant \SNPR operations of $\Theta_{\SNPR}^{\tchinetsNoN}(N)$
minus the number of redundancy sets of non-trivial \SNPR operations of
$\Theta_{\SNPR}^{\tchinetsNoN}(N)$ is 
$$2n(2 + \trias) + r(2 + \trias) + 4r_1 - 2r_3 - 8\trias + \treebranchtriangles + 3\dias + \trapis -
8 - \sum\limits_{e \in \treebottomedges} \delta_T(e)\text{.}$$
\end{proposition}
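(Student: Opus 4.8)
The plan is to rewrite the target quantity so that the underlying geometry becomes transparent. If $S_1,\dots,S_k$ are the redundancy sets of non-trivial \SNPR operations in $\Theta_{\SNPR}^{\tchinetsNoN}(N)$, then the number of non-trivial redundant operations is $\sum_{i=1}^{k}\abs{S_i}$ and the number of redundancy sets is $k$, so the quantity to be computed equals $\sum_{i=1}^{k}(\abs{S_i}-1)$. It therefore suffices to enumerate the redundancy sets and record the size of each: a set of size $s$ contributes $s-1$, so size-two sets contribute $1$, size-three sets contribute $2$, and so on.

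The engine for the enumeration is \cref{lem:makeSiblings}. Given a non-trivial operation $\theta$ with $\theta(N)=N'\neq N$, I locate a pair of labelled vertices $v,y$ that are in a desired relation in $N'$; such a pair must exist, since $\theta$ moves a subnet and hence changes some sibling/uncle/nephew relation or some short path. Then \cref{lem:makeSiblings} lists the finitely many operations on $N$ that could create this relation, and testing which of them reproduce the whole of $N'$ yields the redundancy set of $\theta$. Running this case analysis over the possible flavours of the desired relation — and using the remark that an \NNI axis which is a reticulation edge or an impure tree edge yields no \SNPR redundancy — shows that, up to the overlaps discussed below, every redundancy set arises from exactly one of the sources depicted in \cref{fig:SNPR:redundancy:pureTreeEdge,fig:tchi:SNPR:redundancy:triangles,fig:tchi:SNPR:redundancy:triangleDiamond,fig:tchi:SNPR:redundancy:triangleTrapezoid,fig:tchi:SNPR:redundancy:diamondTrapezoid}: (a) an \NNI operation whose axis is a pure inner tree edge (a size-three set, as in \cref{lem:threeSPRoneNNI}); (b) the creation of a triangle (also a size-three set); (c) the two reticulation edges of an existing triangle, regrafted so as to re-form a triangle elsewhere; (d) each tree-branching triangle (a size-two set); (e) each diamond (a size-four set); and (f) each $\trapis$ trapezoid (a size-two set).

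Next I count instances of each source and multiply by its contribution. The counts for (a), (b) and (c) are not purely functions of $n$ and $r$: the path-altering moves \cref{lem:makeSiblings}(iii) and (iv) and the regrafts in (i) and (ii) must avoid descendant edges of the pruned edge, and a triangle can be created only by regrafting onto a non-descendant edge. These restrictions produce the correction sum $\sum_{e\in\treebottomedges}\delta_T(e)$ (from the \NNI-type redundancies) and, for triangle creation, the $\trias$-weighted terms together with the $4r_1$ and $-2r_3$ adjustments, since a reticulation whose child is a leaf, respectively an $r_3$ structure, changes the set of legal triangle-creating targets. The remaining polynomial terms and the constant $-8$ come from the base counts in (a)--(c), while the structures in (d), (e), (f) contribute the fixed summands $\treebranchtriangles$, $3\dias$ and $\trapis$. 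Finally I apply inclusion--exclusion for the genuine overlaps: a tree-branching triangle is simultaneously an ordinary triangle, and a triangle coexisting with a diamond or with a trapezoid causes redundancy sets to merge or split — precisely the content of \cref{fig:tchi:SNPR:redundancy:triangleDiamond,fig:tchi:SNPR:redundancy:triangleTrapezoid,fig:tchi:SNPR:redundancy:diamondTrapezoid} — and subtracting the doubly counted contributions yields the remaining signs and coefficients, in particular the $-8\trias$ term. Summing all contributions and simplifying using $m=2n+3r-1$ gives the stated expression.

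The main difficulty lies not in the polynomial arithmetic, which is routine once the picture is fixed, but in two places. First, proving completeness of the source list: one must faithfully run the \cref{lem:makeSiblings} case analysis through every type of desired relation and verify that nothing else can make two non-trivial \SNPR operations agree on their output. Second, the bookkeeping for the overlaps among triangles, tree-branching triangles, diamonds and trapezoids, where an error in the inclusion--exclusion would corrupt the low-order coefficients and the $\treebottomedges$ sum; keeping straight which regraft targets are forbidden near an $r_1$ or an $r_3$ structure is the most delicate part of the counting itself.
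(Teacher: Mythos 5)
Your high-level framing is sound and matches the paper's strategy: rewrite the target as $\sum_i(\abs{S_i}-1)$ over redundancy sets, and use \cref{lem:makeSiblings} to pin down how a redundancy can arise. But the proposal stops short of the two things that actually constitute the proof, and where it does commit to details, several are wrong. First, completeness of your source list (a)--(f) is asserted, not argued: you say ``running this case analysis \dots shows'' that nothing else occurs, but the paper establishes this by a concrete exhaustive scheme --- fix a reticulation and the cycle it closes and classify every operation by how it changes that cycle's size, $[c\to c']$ for all $c,c'$ --- and that scheme is what rules out further sources (e.g.\ $[3\to3]$, $[c\to c+x]$ with $x\ge 2$). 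Without some such exhaustion the claim ``every redundancy set arises from exactly one of these sources'' is exactly the hard part left unproved.

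Second, the attribution of the terms in the formula is incorrect, so your accounting would not reproduce it. The $4r_1$ and $-2r_3$ do not come from ``legal triangle-creating targets'': they come from counting \NNI axes --- there are $n+r_1-2$ pure inner tree edges, hence $2(n+r_1-2)$ candidate \NNI operations, minus $r_3$ (one of the two \NNI moves on an axis inside an $r_3$ structure is not tree-child respecting) and minus $\trias$. The sum $\sum_{e\in\treebottomedges}\delta_T(e)$ and the terms $2n\trias+r\trias$ do not come from \NNI-type redundancies either; they come from discarding \emph{every} non-trivial operation that prunes the bottom side of a triangle, because pruning either reticulation edge of a triangle and regrafting to the same edge gives the same neighbour. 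Triangle creation ($[c\to3]$, $c\ge5$) yields redundancy sets of size two, not three, contributing $2r-2\trias$; the trapezoid-to-diamond moves sit inside size-four sets that overlap the \NNI sets (which is precisely why only one extra operation per $\trapis$ trapezoid and per diamond-to-trapezoid direction is discarded), so your ``size-two set per trapezoid'' picture and the clean disjoint-source bookkeeping both fail. Finally, $-8\trias$ is not an inclusion--exclusion correction among triangles, diamonds and trapezoids; it accumulates as $-2\trias$ (triangle axes in the \NNI count), $-4\trias$ (trivial operations pruning bottom sides) and $-2\trias$ (in the triangle-creation count). As written, the proposal is a plausible plan whose quantitative core is missing or misassigned, so it does not yet prove the stated expression.
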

\begin{proof}
Let $\theta = (e, f) \in \Theta_{\SNPR}^{\tchinetsNoN}(N)$ such that $\theta(N)= N' \neq N$.
Let $e = (u, v), f = (x, y)$. 

The operations we want to count are those that we want to \emph{discard} when counting neighbours. 
To count all the operations we can discard, we go through the sources of redundancy one
by one and determine the sizes of the corresponding redundancy sets. 
In order to find all sources, the idea of this proof is to consider when and where $\theta$ can be
redundant with other operations. To cope with all possibilities, we fix a reticulation including a
cycle for which this reticulation is the lowest vertex (i.e. descendant of all other vertices on the
cycle). Then, when applying $\theta$, it can be distinguished whether the size of this cycle gets
decreased, increased, or whether only the order of edges with start vertex on the cycle gets altered.
Therefore in the following case distinction, we denote the change from a cycle of size $c$ to size
$c'$ by $[c \to c']$. Note that there can be no cycle of size 1 or 2. A cycle size of 0 denotes
either no cycle, i.e. a tree, or that no cycle is under consideration or of any influence to
redundancies of $\theta$.

One source of redundancy that we already identified are \NNI operations with a pure inner tree edge
as axis. A \phynet has $n + r_1 - 2$ pure inner tree edges (all edges minus any incident to
reticulations, leaves or the root) each inducing two \NNI operations. However, if the axis is part
of an $r_3$ structure, then one of the possible \NNI for this axis is not tree-child respecting. 
Also, if it is part of a triangle, the operation is either
trivial or not tree-child respecting. There are thus $2(n + r_1 - 2) - r_3 - \trias$ \NNI operations
of interest, each with an \SNPR redundancy set of size three. 
Therefore, we discard the following many non-trivial \tchi-respecting \SNPR operations:

\begin{equation} \label{eq:redundancy:NNI}
	4n + 4r_1 - 2r_3 - 2\trias - 8
\end{equation}

We freely use \cref{thm:tchi:oneAutomorphism} throughout the remainder of this proof.
\begin{description}
  \item[$\lbrack 0 \to 0 \rbrack$] 
  	If no cycle is involved, the part where the \SNPR operations make changes is tree-like and there
	are thus only tree edges. It follows thus from \cref{lem:threeSPRoneNNI} that the redundancy comes
  	from an \NNI operation. Hence, these redundancies are covered by \cref{eq:redundancy:NNI}.

  \item[$\lbrack 3 \to 3 \rbrack$]
  	A triangle with fixed reticulation has only one shape and thus can not be transformed into
  	another one with a single \SNPR operation.
\end{description}
In the following two cases, we will see redundancies due to the reticulation edges of triangles. We
will count the \SNPR operations we discard afterwards. 
\begin{description}
  \item[$\lbrack 3 \to 4 \rbrack$]
  	A triangle can be transformed into a cycle of size four either by pruning one of its edges and
  	regrafting it to an edge outside of the triangle, thus including this edge as third outgoing edge
  	or by regrafting an edge from outside to the triangle. We will see that considering only the
  	latter case will also cover all of the former case.
  	
  	Let the edge $e = (u, v)$ have distance at least two to the triangle and let $f = (x, y)$ be an
  	edge of the triangle. Assuming $e$ has distance greater than two, it is clear (for example
  	with the analysis of \cref{lem:makeSiblings}) that there can only be a redundancy, if $e$ is the
  	reticulation edge of another triangle and $f$ the top side of the triangle. However, no \SNPR
  	operation pruning an edge of the fixed triangle or incident to it can be redundant to this
  	operation and thus any redundancy would be accredited to the other triangle. Therefore, assuming
  	now that $e$ has distance two to the triangle, the following cases can be distinguished.
  	\begin{description}
  	  \item[{\normalfont (i)} $u$ is parent of triangle,] \textbf{$f$ is
  	  long side of triangle.} Requiring that $e$ is a tree edge, the triangle gets transformed into a diamond.
  		Using the analysis of \cref{lem:makeSiblings} with $y$ and $v$ as siblings in $N'$ yields that there are
  		four redundant \SNPR operations, as illustrated
  		by \cref{fig:tchi:SNPR:redundancy:triangleDiamond}.
  		Three \SNPR operations can be associated to the \NNI operation $(e, c, f)$ where $c$ is the
  		incoming edge of the triangle. Also, pruning one of the two reticulation edges of the triangle
  		and regrafting it to $e$ is redundant to doing the same with the other. We note that the
  		\SNPR operation $(f, e)$ corresponds to both redundancies.
  		
  	  \item[{\normalfont (ii)} $u$ is sibling of reticulation of triangle,] \textbf{$f$ is long side of
  	  triangle.} Requiring that both $e$ and its sibling edge are tree edges (and thus
  	  that the triangle is a tree-branching triangle), this transforms the triangle again into a diamond (see
  	  	again \cref{fig:tchi:SNPR:redundancy:triangleDiamond}). This time the analysis, again with $y$
  	 	and $v$ as siblings, yields a redundancy set of size two, namely regrafting $e$ and its
  	  	sibling edge $e'$ to $f$. 

		This means that each tree-branching triangle of $N$ induces a redundancy set of size two. We thus
		discard one \SNPR operation per such triangle:
		\begin{equation} \label{eq:redundancy:treebranchtriangle}
			\treebranchtriangles
		\end{equation}
  	  	
  	  \item[{\normalfont (iii)} $u$ is parent of triangle,] \textbf{$f$ is top side of triangle.}
  	  	Without a requirement on $e$, this transforms the triangle into a trapezoid (see
  	  	\cref{fig:tchi:SNPR:redundancy:triangleTrapezoid}). Like in (i), the analysis with $v$ being
  	  	uncle of $y$, yields again an \NNI operation redundancy with an overlap of a triangle
  	  	reticulation edges redundancy. Furthermore, this can coincide
  	  	with a transformation of another triangle into a trapezoid of the next case.
  	  	
  	  \item[{\normalfont (iv)} $u$ is sibling of reticulation of triangle,] \textbf{$f$ is top side
  	  of triangle.} This requires that the sibling edge of $e$ is a tree edge and transforms
  	  the triangle into a trapezoid (see again \cref{fig:tchi:SNPR:redundancy:triangleTrapezoid}). 
  	   	The analysis yields the same as in the previous case. If the sibling edge of $e$ is not a
  	   	tree edge, we would have an $r_3$ structure and  $N'$ would not be tree child. 
  	   
  	   	Furthermore, if $e$ is a tree edge, the case is equivalent to $f$ being the bottom side of
  	   	the triangle.
  	  \item[{\normalfont (v)} $u$ is parent of triangle,] \textbf{$f$ is bottom side of triangle.}
  	  	The analysis yields that there is no redundancy of \SNPR operations here.
	\end{description}

\begin{figure}[htb]
\begin{center}
  \includegraphics[width=\linewidth]{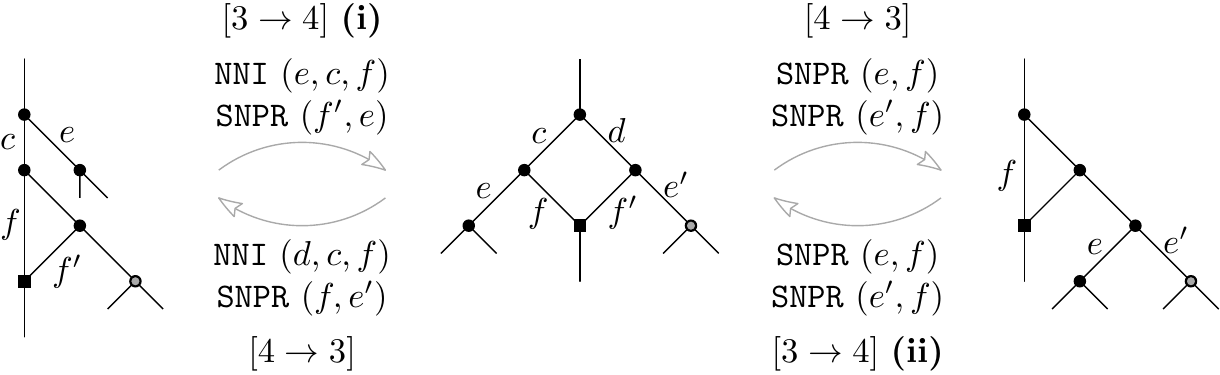}
  \caption{Transformation of triangles into diamonds and vice versa with listed redundancies,
  covering parts of the cases $[3 \to 4]$ and $[4 \to 3]$. }
  \label{fig:tchi:SNPR:redundancy:triangleDiamond}
\end{center}
\end{figure}
\begin{figure}[htb]
\begin{center}
  \includegraphics[width=\linewidth]{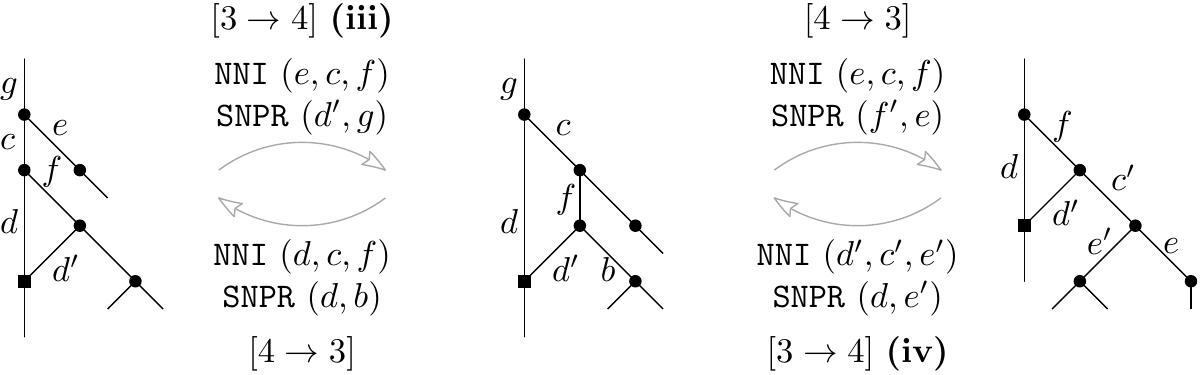}
  \caption{Transformation of triangles into trapezoids and vice versa with listed redundancies,
  covering parts of the cases $[3 \to 4]$ and $[4 \to 3]$.} 
  \label{fig:tchi:SNPR:redundancy:triangleTrapezoid}
\end{center}
\end{figure}

  \item[$\lbrack 3 \to c, c \geq 5 \rbrack$]
	Since it is not possible to add two outgoing edges to a triangle by regrafting them to the triangle
	with a single \SNPR operation, the size can only be increased by pruning an edge of the triangle
	and regrafting it to an edge $f$ at the desired distance. This yields the same neighbour for the two
	reticulation edges, but different cycles for the top side of the triangle and one of its
	reticulation edges.
\end{description}

From the last two cases, we know that two \SNPR operations $\theta$ and $\theta'$ that prune the two
different reticulation edges of a triangle and regraft it to the same edge are always redundant. In
cases, where $\theta$ and $\theta'$ are also redundant to \SNPR operations corresponding to an \NNI
operation, either $\theta$ or $\theta'$ also corresponds to that \NNI operation. In any case,
without loss of generality, we can discard all (non-trivial) \SNPR operations that prune an
edge $e \in \treebottomedges$, i.e the bottom side of a triangle:
\begin{equation} \label{eq:redundancy:triangles}
	2n\trias + r\trias - 4\trias - \sum\limits_{e \in \treebottomedges} \delta_T(e)
\end{equation} 

\begin{description}
  \item[$\lbrack 4 \to 3 \rbrack$]
	This is basically the analysis of $[3 \to 4]$ backwards. See again
	\cref{fig:tchi:SNPR:redundancy:triangleDiamond,fig:tchi:SNPR:redundancy:triangleTrapezoid} for
	illustrations.

  \item[$\lbrack c \to 3, c \geq 5 \rbrack$]
  	To create a triangle with a specific reticulation, one way is to prune one of its reticulation
  	edges and to regraft it to an edge incident to the other reticulation edge. This is the
  	reverse of $[3 \to c]$ and yields two redundancy sets of size two. 
  	
  	The second possibility is to to prune the parent edge of one of the reticulation edges and
  	regraft it to the other reticulation edge. Again, as seen in $[3 \to c]$, this is not redundant
  	to the other way or other operations.
\end{description}
The last two cases covered the creation of triangles. With \cref{eq:redundancy:treebranchtriangle}
we accounted for redundancies from a tree-branching triangle to a diamond. With
\cref{eq:redundancy:diamond} we do the reverse:
\begin{equation} \label{eq:redundancy:diamond}
	\dias
\end{equation} 
Furthermore, as the reverse of \cref{eq:redundancy:triangles}, each reticulation edge that is not
part of a triangle corresponds to two redundant \SNPR operations. In the case that a four cycle is
created, this can coincide with a redundancy due to an \NNI operation. Like before, we can discard
one of the two operations:
\begin{equation} \label{eq:redundancy:toTriangle}
	2r - 2\trias
\end{equation}

\begin{description}
  \item[$\lbrack 4 \to 4 \rbrack$]
	To change a cycle of size four into another cycle of size four, one can either change the order of
	the outgoing edges of a trapezoid, which is then equivalent to Case $[0 \to 0]$, or
	transform a trapezoid into a diamond or vice versa (see
	\cref{fig:tchi:SNPR:redundancy:diamondTrapezoid}). Applying \cref{lem:makeSiblings} for any of the
	directions with two appropriate vertices yields redundancy sets of size four. We see that three
	edges correspond to an \NNI operation. We have thus already counted a neighbour and can discard
	the fourth \SNPR operation. We note that there are two different transformations from a diamond to
	a trapezoid distinguished by the order of the resulting trapezoid's outgoing edges. Hence, we discard
	the following many \SNPR operations:
	\begin{equation} \label{eq:redundancy:fourFour}
		2\dias + \trapis
	\end{equation}
	
\begin{figure}[htb]
\begin{center}
  \includegraphics{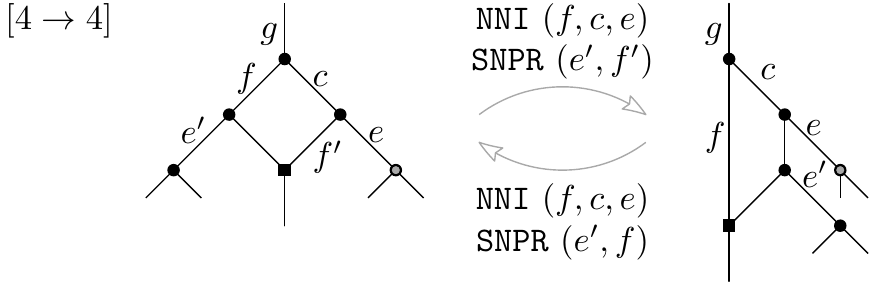}
  \caption{Transformation of diamonds into $\trapis$ trapezoids and vice versa with listed redundancies,
  illustrating the case $[4 \to 4]$.} 
  \label{fig:tchi:SNPR:redundancy:diamondTrapezoid}
\end{center}
\end{figure}

  \item[$\lbrack c \to c + 1, c \geq 4 \rbrack$]
	Adding a branch to a cycle of size at least four, and thus increasing its size by one, is, by using 
	\cref{lem:makeSiblings}, only possible if the operations correspond to an \NNI operation.

  \item[$\lbrack c \to c + x, c \geq 4, x \geq 2 \rbrack$]
  	Unlike in the case $[3 \to 5]$ there is obviously no redundancy of any edges of the cycle
  	anymore.

  \item[$\lbrack c + 1 \to c, c \geq 4 \rbrack$]
	This is the reverse of $[c \to c + 1]$ and there are thus only redundancies due to an
	\NNI operation.

  \item[$\lbrack c + x \to c, c \geq 4, x \geq 2 \rbrack$]
	As the reverse of the case $[c + x \to c]$, there are no redundancy in this case.  
\end{description}

We have now covered all cases of transformations of a cycle into another one. We have further
identified the different sources of redundancies and discarded non-trivial \tchi-respecting \SNPR
operations accordingly. Adding
\cref{eq:redundancy:NNI,eq:redundancy:treebranchtriangle,eq:redundancy:triangles,eq:redundancy:diamond,eq:redundancy:toTriangle,eq:redundancy:toTriangle,eq:redundancy:fourFour}
together, the statement follows. 
\end{proof}

\paragraph{The unit \SNPR tree-child neighbourhood.}
\pdfbookmark[2]{Unit SNPR tree-child neighbourhood}{SNPRNneighbourhood}
The unit \SNPR tree-child neighbourhood of $N$ is determined by the number of \tchi-respecting
\SNPR operations on $N$ (\cref{lem:tchi:SNPR:numOps}), from which trivial operations are subtracted
(\cref{lem:tchi:SNPR:trivial}) and operations that yield redundant neighbours are discarded
(\cref{lem:tchi:SNPR:overcountingOps}).

\begin{theorem}\label{thm:tchi:SNPR:neighbourhood}
Let $N \in \tchinets, n \geq 2$.

	\noindent
Then the unit \SNPR tree-child neighbourhood $U_{\SNPR}^{\tchinetsNoN}(N)$ of $N$ has size 
\begin{equation*}
\begin{split}
	\abs{ U_{\SNPR}^{\tchinetsNoN}(N) } 
	  & = 4n^2 + 10nr - 2n(r_2 + r_3 + \trias) - 14n
		+ 2r^2 - r(3r_2 + 3r_3 + \trias)\\
	  &  - 11r - 4r_1 + 4r_2 + 7r_3 + 5\trias 
		- \treebranchtriangles - 3\dias - \trapis
		+ 14\\ 
	  & - \sum\limits_{e \in E_{T^*}} \delta(e)
		- \sum\limits_{e \in E_R} \delta_T(e) 
	    + \sum\limits_{e \in \treebottomedges} \delta_T(e) \text{.}
\end{split}
\end{equation*}
\end{theorem}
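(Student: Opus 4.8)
The plan is to assemble the formula purely by arithmetic bookkeeping from the three preceding results, since the semantic work — which operations respect tree-childness, which are trivial, and which are redundant — has already been done. Concretely, I would start from the identity
\[
\abs{ U_{\SNPR}^{\tchinetsNoN}(N) }
 = \abs{ \Theta_{\SNPR}^{\tchinetsNoN}(N) }
 - (\text{\# trivial ops})
 - \bigl((\text{\# non-trivial redundant ops}) - (\text{\# redundancy sets})\bigr),
\]
which is exactly the recipe spelled out in the paragraph introducing the theorem: each trivial operation contributes nothing to the neighbourhood and must be removed once; each redundancy set of non-trivial operations contributes exactly one neighbour, so from the operations in such a set we must discard all but one, i.e. discard (size $-1$) of them, and summed over all sets this is precisely (\# non-trivial redundant ops) $-$ (\# redundancy sets). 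I would note explicitly that the quantity computed in \cref{lem:tchi:SNPR:overcountingOps} is already that difference, so no re-derivation of set sizes is needed here.

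Next I would substitute the closed forms: $\abs{ \Theta_{\SNPR}^{\tchinetsNoN}(N) }$ from \cref{lem:tchi:SNPR:numOps}, the count $4n + 4r + 3\trias - 4$ from \cref{lem:tchi:SNPR:trivial}, and the discard count
$2n(2 + \trias) + r(2 + \trias) + 4r_1 - 2r_3 - 8\trias + \treebranchtriangles + 3\dias + \trapis - 8 - \sum_{e \in \treebottomedges} \delta_T(e)$
from \cref{lem:tchi:SNPR:overcountingOps}. Then I would collect like terms. The three $\delta$-sums are already in disjoint normal form ($E_{T^*}$, $E_R$, $\treebottomedges$ are handled separately in the respective lemmas, with $\delta$ versus $\delta_T$ chosen to avoid double-counting, exactly as remarked after \eqref{eq:SNPR:tchiRespecting:retEdge}), so the $\sum_{e \in E_{T^*}}\delta(e)$ and $\sum_{e \in E_R}\delta_T(e)$ terms pass through with a minus sign, while $-(-\sum_{e \in \treebottomedges}\delta_T(e))$ from the discard count becomes the $+\sum_{e \in \treebottomedges}\delta_T(e)$ appearing in the statement. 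For the polynomial part I would track the coefficients termwise: the $n^2$ coefficient is $4$ (untouched); the $nr$ coefficient is $10$ (untouched); the $n r_2$ and $n r_3$ coefficients are $-2$ each; the $n\trias$ coefficient is $-2$ (solely from the discard count); the $n$ coefficient is $-6 - 4 - 4 = -14$; the $r^2$ coefficient is $2$; the $r r_2$, $r r_3$ coefficients are $-3$ each and $r\trias$ is $-1$ (from the discard count); the $r$ coefficient is $-5 - 4 - 2 = -11$; the $r_1$ coefficient is $-4$; the $r_2$ coefficient is $4$; the $r_3$ coefficient is $5 - 0 + 2 = 7$; the $\trias$ coefficient is $-3 + 8 = 5$; the $\treebranchtriangles$, $\dias$, $\trapis$ coefficients are $-1, -3, -1$; and the constant is $2 - (-4) - (-8) = 14$. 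This matches the displayed formula exactly.

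The only genuine obstacle is not the algebra but making sure nothing is double-subtracted: one must be confident that the trivial operations counted in \cref{lem:tchi:SNPR:trivial} are disjoint from the non-trivial redundant operations counted in \cref{lem:tchi:SNPR:overcountingOps} (they are, by definition — the latter lemma restricts to $\theta$ with $\theta(N) = N' \neq N$), and that a redundancy set consisting partly of operations already absorbed into an \NNI count is not re-discarded — but \cref{lem:tchi:SNPR:overcountingOps} has already reconciled exactly these overlaps (the repeated references to "we can discard one of the two operations" and "three edges correspond to an \NNI operation"). So in the write-up I would state the bookkeeping identity, cite the three results, and then simply say the formula follows by summing and collecting terms, perhaps displaying one intermediate line grouping the $n^2, nr, n, r^2, r$ terms and the structure terms before the final simplification. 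I would keep the arithmetic terse, since reproducing every cancellation adds no insight.
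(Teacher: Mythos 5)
Your proposal is correct and is exactly the paper's own argument: the theorem is obtained by taking the count of tree-child-respecting \SNPR operations from \cref{lem:tchi:SNPR:numOps}, subtracting the trivial operations of \cref{lem:tchi:SNPR:trivial}, and discarding the quantity (redundant non-trivial operations minus redundancy sets) from \cref{lem:tchi:SNPR:overcountingOps}, then collecting terms. Your coefficient bookkeeping, including the sign flip giving $+\sum_{e \in \treebottomedges}\delta_T(e)$ and the constant $2+4+8=14$, checks out against the stated formula.
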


\cref{tbl:tchi:neighbourhoodExample:parameters} lists the values for the parameters of the \tchi
network $N$ from \cref{fig:tchi:neighbourhood}. Applying these values to
\cref{thm:tchi:SNPR:neighbourhood} we get that $N$ has $7$ different \SNPR tree-child neighbours, as
depicted in \cref{fig:tchi:neighbourhood}.

\begin{table}[htbp]
	\begin{tabular}{c|l|l}
    parameter & description & in $N$\\
    \hline
    $n$ & \# leaves & 3\\
    \hline
    $r$ & \# reticulations & 1\\
    \hline
    $r_1$ & \# reticulations with leaf as child & 1\\
    \hline
    $r_2$ & \# $r_2$ structures & 0\\
    \hline
    $r_3$ & \# $r_3$ structures & 0\\
    \hline
    $\trias$ & \# triangles & 0\\
    \hline
    $\treebranchtriangles$ & \# tree-branching triangles & 0\\
    \hline
    $\dias$ & \# diamonds & 0\\
    \hline
    $\trapis$ & \# $\trapis$ trapezoids & 1\\
    \hline
    $\sum_{e \in E_{T^*}} \delta(e)$ & \# descendant edges of pure non-critical tree edges & 13\\
    \hline
    $\sum_{e \in E_R} \delta_T(e)$ & \# descendant tree edges of reticulation edges & 2\\
    \hline
    $\sum_{e \in \treebottomedges} \delta_T(e)$ & \# descendant tree edges of triangle bottom sides
    & 0\\
	\end{tabular}
	\caption{Parameters for the network $N$ from \cref{fig:tchi:neighbourhood}, which has a unit
	\SNPR tree-child neighbourhood of size $7$.}
	\label{tbl:tchi:neighbourhoodExample:parameters}
\end{table} 

If $N$ is a \phytree, it has of course zero reticulations and no special structures. We thus get the
following formula for \phytrees.

\begin{corollary}
Let $T = (V, E) \in \trees$.

	\noindent
Then the unit \SNPR tree neighbourhood $U_{\SNPR}^{\treesNoN}(T)$ of $T$ has size
$$ \abs{ U_{\SNPR}^{\treesNoN}(T)} = 4n^2 - 14n + 14 - \sum\limits_{e \in E} \delta(e)
\text{.}$$
\end{corollary}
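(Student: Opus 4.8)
The plan is to derive the corollary as a direct specialisation of \cref{thm:tchi:SNPR:neighbourhood} to the case $r = 0$. First I would observe that a \phytree $T \in \trees$ is a tree-child network with no reticulations, so the theorem applies. Setting $r = 0$ immediately forces all reticulation-dependent parameters to vanish: $r_1 = r_2 = r_3 = 0$ since there are no reticulations at all, and consequently $\trias = 0$ (a triangle contains a reticulation), $\treebranchtriangles = 0$, $\dias = 0$ (a diamond has a reticulation at its sink $z$), and $\trapis = 0$ (likewise for a trapezoid). Similarly the three correction sums collapse: $E_R = \emptyset$ so $\sum_{e \in E_R}\delta_T(e) = 0$, and $\treebottomedges = \emptyset$ so $\sum_{e \in \treebottomedges}\delta_T(e) = 0$.

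Next I would handle the remaining sum. With $r = 0$, the set $E_{T^*}$ of pure non-critical tree edges is simply the set of all edges of $T$ that are neither the root edge nor incident to a leaf from above in the relevant way — but in a tree there are no critical edges (no $r_2$, $r_3$ structures or triangles), and every edge is a pure tree edge. One must be slightly careful: $E_{T^*}$ as defined excludes edges incident to the root or a leaf only insofar as those are the non-prunable or non-regraftable ones, but $\delta(e)$ for a leaf edge is $0$ and for the root edge it equals $m - 1 = 2n - 2$; one should check against the derivation in \cref{lem:tchi:SNPR:numOps} whether the intended sum $\sum_{e \in E}\delta(e)$ in the corollary already absorbs these. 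Since $\delta$ of a leaf edge is zero, extending the sum from $E_{T^*}$ to all of $E$ adds only the root-edge term, and I would verify that the bookkeeping in the theorem's proof is consistent with writing the final answer over all of $E$ as stated.

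Finally I would substitute $r = 0$ into the explicit polynomial in \cref{thm:tchi:SNPR:neighbourhood}: the terms $10nr$, $-2n(r_2 + r_3 + \trias)$ beyond the $-14n$, $2r^2$, $-r(3r_2 + 3r_3 + \trias)$, $-11r$, $-4r_1$, $4r_2$, $7r_3$, $5\trias$, $-\treebranchtriangles$, $-3\dias$, $-\trapis$ all drop out, leaving $4n^2 - 14n + 14 - \sum_{e \in E_{T^*}}\delta(e)$, which equals $4n^2 - 14n + 14 - \sum_{e \in E}\delta(e)$ by the previous paragraph. I do not expect any genuine obstacle here; the only place requiring care is confirming that $E_{T^*}$ versus $E$ discrepancy contributes nothing (which it does not, since leaf edges have $\delta = 0$ and the root edge's contribution is accounted for exactly as in the theorem), so the corollary follows immediately.
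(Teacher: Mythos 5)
Your proposal is correct and follows essentially the same route as the paper, which obtains the corollary precisely by specialising \cref{thm:tchi:SNPR:neighbourhood} to $r=0$, so that all structure parameters and the sums over $E_R$ and $\treebottomedges$ vanish. The only point you over-complicate is the sum: for a tree, $E_{T^*}=E$ outright, since every edge (including the root edge and the leaf edges) is a pure non-critical tree edge, so $\sum_{e\in E_{T^*}}\delta(e)=\sum_{e\in E}\delta(e)$ with no separate accounting for the root-edge term needed.
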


Accounting for the fact that we count descendants of edges and not ancestors of vertices, this
formula equals the result by Song~\cite{Son03}.

\paragraph{The unit SNPR$+$ and SNPR$-$ tree-child neighbourhood.} 
\pdfbookmark[2]{Unit SNPRP and SNPRM tree-child neighbourhood}{SNPRPneighbourhood}
We now consider \SNPRP and \SNPRM operations and count again first the number of such operations.
For this, let $E_{PS}$ denote the set of edges that are pure tree edges with a sibling pure tree
edge.

\begin{lemma}\label{lem:SNPRP:numOps}
Let $N \in \tchinets$ with $n \geq 2$. 

	\noindent
Then
\begin{equation*}
\begin{split}
\abs{ \Theta_{\SNPRP}^{\tchinetsNoN}(N) } 
  &= 4n^2 - 2nr - 8n - 2r^2  + 2r + 4 - \sum\limits_{e \in E_{PS}} \delta_T(e) \text{, and}\\
\abs{ \Theta_{\SNPRM}^{\tchinetsNoN}(N) } 
  &= 2r  \text{.}
\end{split}
\end{equation*}
\end{lemma}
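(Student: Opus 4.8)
The two counts are independent, so I would treat them separately, starting with the easy one. For $\abs{\Theta_{\SNPRM}^{\tchinetsNoN}(N)}$: by definition an \SNPRM operation is specified by a single reticulation edge $e$, it always deletes $e$ and suppresses the two endpoints, and since $N$ is tree child the sibling of the reticulation is a tree vertex, so the suppression never creates a vertex without a tree child --- hence every \SNPRM operation on a tree-child network is tree-child respecting. As $N$ has $r$ reticulations and each contributes exactly two in-edges, there are $2r$ such operations, and all of them are counted. (One should also check that distinct reticulation edges give distinct operations, which is immediate from \cref{thm:tchi:oneAutomorphism}, so no identification issues arise; but the statement only asks for $\abs{\Theta_{\SNPRM}^{\tchinetsNoN}(N)}$, the number of operations, not of neighbours, so even that is not strictly needed here.)

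For $\abs{\Theta_{\SNPRP}^{\tchinetsNoN}(N)}$, the plan is to count pairs $(e,f)$, where $f$ is subdivided by $u'$, $e$ is subdivided by $v'$, and the new edge $(u',v')$ is added, then subtract those that violate acyclicity or the tree-child property. An \SNPRP operation adds a new reticulation (the vertex $v'$), so after the operation the child of $u'$ on the "$e$-side" is the reticulation $v'$; for tree-childness at $u'$ we need the other child of $u'$ --- namely the lower endpoint of $f$ --- not to be a reticulation. So $f$ must be a tree edge. Symmetrically, $v'$ becomes a reticulation whose parents are $u'$ and the upper endpoint of $e$; for the lower endpoint of $e$ we need... actually $v'$'s child is the lower endpoint of $e$, and the parent of $v'$ on the other side, i.e.\ the upper endpoint of $e$, still has $v'$ (a reticulation) as one child, so that parent must have a tree-vertex sibling-child --- this forces that $e$'s upper endpoint, after subdivision, still has a tree child, which is why $e$ must have a sibling that is a (pure) tree edge. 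This is exactly the role of the set $E_{PS}$ (pure tree edges whose sibling edge is also a pure tree edge). So: $f$ ranges over all tree edges, $e$ ranges over $E_{PS}$, $e$ must not be a descendant of $f$ (for acyclicity, since the new edge goes $u' \to v'$ and $u'$ is on $f$), and $e = f$ is allowed via the second clause of the \SNPRP definition (subdivide $e$ twice).

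The bookkeeping then runs as follows. The number of tree edges is $m - 3r = 2n - 1$ (all $m = 2n+3r-1$ edges minus the $3r$ reticulation edges). For $E_{PS}$ one counts pure inner tree edges whose sibling is also a pure tree edge; I would express $\abs{E_{PS}}$ in terms of $n$, $r$ and the structure counts and plug in --- this is the routine-but-fiddly step. The cross term is $\abs{E_{PS}}\cdot(2n-1)$, from which one subtracts $\sum_{e \in E_{PS}} \delta_T(e)$ to exclude the cases where the chosen tree edge $f$ is a tree-edge descendant of $e$ (using $\delta_T$ rather than $\delta$ because non-tree-edge descendants of $e$ are anyway excluded from the range of $f$), and one has to be slightly careful about whether $f$ being a reticulation-edge descendant of $e$ is already handled --- it is, because $f$ is restricted to tree edges from the start. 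Then the diagonal contribution $e = f \in E_{PS}$ is already included once in the product (the term with $f = e$), and one checks it genuinely yields a valid tree-child network (it does: subdividing a pure tree edge with a sibling tree edge, twice, creates a triangle-free reticulation whose relevant siblings are tree vertices). Matching the resulting polynomial against $4n^2 - 2nr - 8n - 2r^2 + 2r + 4$ fixes the value of $\abs{E_{PS}}$ one needs, and I expect the identity $\abs{E_{PS}} = 2n - 2r - \text{(correction terms that cancel)}$ to fall out; if structure-count corrections survive, that signals a miscount in which siblings are forced to be tree edges, so the main obstacle is getting the characterisation of $E_{PS}$ exactly right --- in particular treating edges incident to the root, to leaves, and to triangles/$r_i$-structures correctly so that no structure-dependent terms remain, consistent with the clean form of the claimed formula.
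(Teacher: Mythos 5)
Your \SNPRM count is fine and matches the paper's (brief) argument. The \SNPRP count, however, has three concrete problems. First, the diagonal case $e=f$ is \emph{not} tree-child respecting, contrary to your claim: by definition it subdivides $e$ twice with $u'$ above $v'$ and then adds a second edge $(u',v')$, so $u'$ has the reticulation $v'$ as both of its children and hence no tree child; the paper explicitly excludes $f=e$ for precisely this reason. Second, your range for $f$ is wrong: $f$ may be any non-reticulation edge that is neither $e$ nor a descendant of $e$, and in particular the $r$ impure tree edges leaving reticulations are legitimate choices (subdividing such an edge gives the new vertex $u'$ the tree child $y$, and the reticulation above it gets a tree child). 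So there are $m-2r-1-\delta_T(e)=2n+r-2-\delta_T(e)$ choices per admissible $e$, not the $2n-1-\delta_T(e)$ you get from counting ``$m-3r$ tree edges'' ($m-3r$ is the number of \emph{pure} tree edges; tree edges number $m-2r$). A smaller slip: you state the acyclicity constraint as ``$e$ not a descendant of $f$'', which is backwards, though your later subtraction of $\sum_{e\in E_{PS}}\delta_T(e)$ quietly uses the correct direction.

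Third, and most seriously as a matter of proof, you never compute $\abs{E_{PS}}$: you propose to ``fix the value one needs'' by matching the claimed formula, which is circular. The paper's argument establishes it directly: each reticulation rules out the five edges consisting of its three incident edges and the two sibling edges of its in-edges, these five-edge sets are pairwise disjoint because $N$ is tree child, and the root edge is also excluded, so $\abs{E_{PS}}=m-5r-1=2n-2r-2$; then $(2n-2r-2)(2n+r-2)-\sum_{e\in E_{PS}}\delta_T(e)$ is exactly the stated polynomial. With your ranges (pure tree edges only, diagonal included) an honest computation would give $(2n-2r-2)(2n-1)$ minus a different descendant sum, which differs from the claimed expression by roughly $(r-1)\abs{E_{PS}}$, so the errors are not merely presentational; correcting the three points above essentially reproduces the paper's proof.
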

\begin{proof}
For an \SNPRP operation $(e, f)$, which adds an edge from $f$ to $e$, for $(e, f)(N)$ to be tree
child, $e$ has to be a pure tree edge with a sibling pure tree edge.
This implies that $e \neq e_{\root}$ and that $e$ can not be incident to a reticulation or sibling
edge of a reticulation edge.
Otherwise, if $e$ would be incident to a reticulation, this would yield a pure reticulation edge,
and if $e$ would be the sibling edge of a reticulation edge, this would yield a vertex with two
reticulations as children. In either case $(e, f)(N)$ would not be tree child.
Thus every reticulation induces a set of five edges, consisting of the three edges incident to it
and their two sibling edges. Since $N \in \tchinets$, clearly these sets are disjoint for every
pair of reticulations of $N$. There are thus $m - 5r - 1$ choices for $e$.

Next, by the definition of an \SNPRP operation, the edge $f$ can not be a descendant of $e$.
Furthermore, $f$ can not be a reticulation edge or $e$.
Otherwise, if $f$ would be a reticulation edge, this would yield a vertex with two reticulations as
children, and if $f = e$, the operation would create a parallel edge. 
In either case $(e, f)(N)$ would not be tree child. Clearly any other choice of $f$ is fine.
For any feasible choice of $e$, there are thus $m - 2r - 1 - \delta_T(e)$ choices of $f$.
With the $\delta_T(e)$ summing up to $\sum_{e \in E_{PS}} \delta_T(e)$ over all choices of $e$ and 
$(m - 5r - 1)(m - 2r - 1) = 4n^2 - 2nr - 8n - 2r^2  + 2r + 4$ the first statement follows.

Concerning $\Theta_{\SNPRM}^{\tchinetsNoN}(N)$, it is easy to see that removing any reticulation
edge of a tree-child network yields again a tree-child network. There are thus $2r$ tree-child
respecting \SNPRM operations on $N$.
\end{proof}

Note that \SNPRP and \SNPRM operations are never trivial, since they change the number of
reticulations. However, for both of these types of operations redundancies might exist. Like for
most \SNPR redundancies, these redundancies are equivalent to \NNIP and \NNIM operations, as the
proof of the following proposition shows.

\begin{proposition}\label{lem:tchi:SNPRP:neighbourhood}
Let  $N \in \tchinets$ with $n \geq 2$.

	\noindent
Then the unit \SNPRP tree-child neighbourhood $U_{\SNPRP}^{\tchinetsNoN}$ of $N$ has size 
 	$$ \abs{ U_{\SNPRP}^{\tchinetsNoN}(N) } = 
 	4n^2 - 2nr - 10n - 2r^2  + 4r + 6 - \sum\limits_{e \in E_{PS}} \delta_T(e) \text{,}$$
and the unit \SNPRM tree-child neighbourhood $U_{\SNPRM}^{\tchinetsNoN}$ of $N$ has size
 	$$ \abs{ U_{\SNPRM}^{\tchinetsNoN}(N) } = 
 	2r - \trias \text{.}$$
\end{proposition}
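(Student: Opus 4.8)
The plan is to obtain each of the two neighbourhood sizes from the corresponding operation count in Lemma~\ref{lem:SNPRP:numOps} by subtracting the redundancies, mirroring the argument used for \SNPR. Since \SNPRP and \SNPRM operations always change the reticulation number, none of them can be trivial, so only redundancy sets need to be accounted for; the target formulas differ from the counts in Lemma~\ref{lem:SNPRP:numOps} by the additive term $-2n-2r-2$ (respectively $-\trias$), so I expect exactly that many operations to be discarded.

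For \SNPRM: an \SNPRM operation $(e)$ deletes a reticulation edge $e=(u,v)$ and suppresses $u$ and $v$. Two such operations $(e)$ and $(e')$ are redundant iff deleting $e$ and deleting $e'$ yield the same network. First I would argue, using \cref{thm:tchi:oneAutomorphism} so that vertices and edges are uniquely identifiable, that this forces $e$ and $e'$ to be the two reticulation edges of a common triangle: deleting either reticulation edge of a triangle with vertices $x,u,w$ (edges $(x,u),(x,w),(u,w)$ and $v$ the second child of $u$) removes the reticulation $w$ and, after suppression, leaves the same tree edge from the parent of $x$ down through $u$ to $v$. Conversely if $e,e'$ are not the reticulation edges of the same triangle, then the sets of descendant/ancestor relations changed are distinct, so the resulting networks differ. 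Hence the redundancy sets of $\Theta_{\SNPRM}^{\tchinetsNoN}(N)$ are exactly the $\trias$ pairs of triangle reticulation edges, each of size two, and $\abs{U_{\SNPRM}^{\tchinetsNoN}(N)} = 2r - \trias$.

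For \SNPRP: an \SNPRP operation $(e,f)$ adds a new edge from a subdivision vertex of $f$ to a subdivision vertex of $e$, creating a new reticulation whose child is the tail of the old $e$. Following the template of \cref{lem:threeSPRoneNNI} and \cref{lem:makeSiblings}, I would show that a redundancy of two \SNPRP operations corresponds to an \NNIP operation, and conversely that each relevant \NNIP operation $(e,f)$ gives a redundancy set of a fixed size. Concretely, pick the two endpoints affected in $N'$ that become, say, parent and child of the new reticulation but were not in that relation in $N$; \cref{lem:makeSiblings} restricts how $\theta$ can achieve this, and on the tree part the only possibilities are the symmetric ways of inserting the new reticulation edge between a pair of sibling (pure-tree) edges — this is precisely an \NNIP move. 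Counting: an \NNIP operation is determined by choosing an edge $e$ to receive the bottom of the new reticulation edge and a sibling edge $f$; equivalently one counts pairs of sibling pure tree edges $\{e,f\}$ with the new edge oriented from one subdivision to the other, and I expect each such configuration to contribute a redundancy set whose excess (size minus one) totals $2n+2r+2$ over the whole network, giving $\abs{U_{\SNPRP}^{\tchinetsNoN}(N)} = 4n^2 - 2nr - 10n - 2r^2 + 4r + 6 - \sum_{e\in E_{PS}}\delta_T(e)$.

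The main obstacle will be the \SNPRP redundancy bookkeeping: I must verify that the only source of \SNPRP redundancy is the \NNIP correspondence — in particular that creating a new reticulation near an existing triangle, diamond or trapezoid does not spawn extra redundancy sets the way the \SNPR analysis did — and then count the \NNIP-type redundancy sets and their sizes exactly, carefully avoiding double-counting where an \SNPRP operation sits in more than one such set and checking the boundary behaviour at the root edge and at edges incident to reticulations (which are already excluded from being $e$ or $f$ by Lemma~\ref{lem:SNPRP:numOps}). Once the excess count is pinned down to $2n+2r+2$, subtracting it from the operation count from Lemma~\ref{lem:SNPRP:numOps} yields the stated formula, and the \SNPRM case is the short one.
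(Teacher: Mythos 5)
Your \SNPRM argument is fine and matches the paper: each triangle makes its two reticulation edges redundant (one redundancy set of size two per triangle, nothing else by uniqueness of vertices), giving $2r-\trias$. The \SNPRP half, however, has a genuine gap, and it is quantitative as well as structural. The difference between $\abs{\Theta_{\SNPRP}^{\tchinetsNoN}(N)}$ from \cref{lem:SNPRP:numOps} and the claimed neighbourhood size is $(4n^2-2nr-8n-2r^2+2r+4)-(4n^2-2nr-10n-2r^2+4r+6)=2n-2r-2=2(n-r-1)$, not $2n+2r+2$; subtracting your excess of $2n+2r+2$ would give $4n^2-2nr-10n-2r^2+2-\sum_{e\in E_{PS}}\delta_T(e)$, which is not the stated formula. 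So the plan of ``pinning the excess down to $2n+2r+2$'' cannot succeed, and the number you would need to justify is $2(n-r-1)$, i.e.\ two discarded operations per pair of sibling pure tree edges (there are exactly $n-r-1$ such pairs, since of the $n+r-1$ inner tree vertices exactly $2r$ are tails of reticulation edges).

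The structural identification of the redundancy is also off. The two ``symmetric ways of inserting the new reticulation edge between a pair of sibling pure tree edges'' $e=(v,w)$ and $e'=(v,y)$, namely $(e,e')$ and $(e',e)$, are \emph{not} redundant: one puts the new reticulation above $w$, the other above $y$, and by \cref{thm:tchi:oneAutomorphism} these are different tree-child networks. The actual redundancy (this is the case $[0\to 3]$, creation of a triangle) is between $(e,e')$ and $(e,f)$, where $f=(u,v)$ is the common parent edge of the sibling pair: regrafting the top of the new edge to the sibling edge or to the parent edge yields the same triangle above $w$. Each ordered choice of $e$ within a pure-tree sibling pair thus gives one redundancy set of size two, i.e.\ two discarded operations per pair and $2(n-r-1)$ in total, which is exactly the needed difference; cycles of size at least four created by an \SNPRP operation give no redundancy because the bottom edge of the new reticulation edge is forced. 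Without replacing the orientation-swap picture by the sibling-edge-versus-parent-edge picture, and correcting the excess count accordingly, the \SNPRP formula does not follow.
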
  
\begin{proof}
	This proof uses the concept of the proof of \cref{lem:tchi:SNPR:overcountingOps}. 
	For the first part, we assume that the considered \SNPRP operations are tree-child respecting.
	\begin{description}
	  \item[$\lbrack 0 \to 3 \rbrack$]
	  	Let $f = (u, v)$ be a tree edge with $v$ having two outgoing pure tree edges $e = (v, w)$ and
	  	$e' = (v, y)$. Then a reticulation and a triangle can be added by the \SNPRP operation $(e, f)$,
		which adds an edge from $f$ to $e$. This is however redundant to the \SNPRP operation $(e, e')$.
	  	It follows by the uniqueness of $e$ that there are no further redundant \SNPRP operations.
	  	Furthermore, these operations are redundant to the \NNIP operation $(e, e')$.
	  	  
	  \item[$\lbrack 0 \to c, c \geq 4 \rbrack$]
	  	Similar to $[0 \to 3]$, the edge $e$ that gets subdivided for the new reticulation is unique.
	  	However, for a new cycle of size at least 4, there are no two edges that can be chosen
	  	interchangeably to be subdivided for the source of the new reticulation edge to yield the same
	  	network $N'$.
	\end{description}
	There are $n - r - 1$ pairs of siblings of pure tree edges in $N$. To account for redundancy, we
  	can thus discard $2(n - r - 1)$ \SNPRP operations. The first part follows then from
  	\cref{lem:SNPRP:numOps}.

	\begin{description}
	  \item[$\lbrack 3 \to 0 \rbrack$]
	  	If a triangle gets removed, removing one of the reticulation edge of the triangle is redundant to
	  	removing the other. Since no reticulations are isomorphic in $N$, there can be no further
	  	reticulation edges in $N$ that if removed would yield the same network. These \SNPRM operations
	  	are thus equivalent to an \NNIM operation of the respective triangle.
	
	  \item[$\lbrack c \to 0, c \geq 4 \rbrack$]
	  	The reticulation edges of a cycle of size at least four are neither isomorphic nor can
	  	they change roles like in triangles. Thus removing a reticulation edge can not
	  	be redundant to removing another of a cycle of size at least four.
	\end{description}
	There are $\trias$ many \NNIM operations in $N$. Discarding one \SNPRM operation per triangle, the
	second part follows again from \cref{lem:SNPRP:numOps}.
\end{proof}

We can again consider the tree-child network $N$ from \cref{fig:tchi:neighbourhood}. Since $N$ has one
reticulation, but no triangles, $N$ has two \SNPRM neighbours. Using
\cref{tbl:tchi:neighbourhoodExample:parameters} and the fact that $\sum_{e \in E_{PS}}
\delta_T(e) = 2$, we get that $N$ has $6$ \SNPRP tree-child neighbours.

\paragraph{The unit SNPR tree-child neighbourhood.} 
\pdfbookmark[2]{Unit SNPR tree-child neighbourhood}{SNPRneighbourhood}
To obtain the total size of the SNPR tree-child neighbourhood of a tree-child network $N$ we can now add
together the sizes of the \SNPR, \SNPRP and \SNPRM neighbourhoods
(\cref{thm:tchi:SNPR:neighbourhood,lem:tchi:SNPRP:neighbourhood}).

\begin{theorem}\label{thm:tchi:SNPRneighbourhood}
Let $N \in \tchinets$ with $n \geq 2$.

	\noindent
Then the unit SNPR tree-child neighbourhood $U_{\OPSNPR}^{\tchinetsNoN}$ of $N$ has size 
\begin{equation*}
\begin{split}
	\abs{ U_{\OPSNPR}^{\tchinetsNoN}(N) } 
	  & = 8n^2 + 8nr - 2n(r_2 + r_3 + \trias) - 24n
		- r(3r_2 + 3r_3 + \trias) - 5r\\
	  & - 4r_1 + 4r_2 + 7r_3 + 4\trias - \treebranchtriangles - 3\dias - \trapis	+ 20\\ 
	  & - \sum\limits_{e \in E_{T^*}} \delta(e)
		- \sum\limits_{e \in E_R} \delta_T(e) 
		- \sum\limits_{e \in E_{PS}} \delta_T(e)
	    + \sum\limits_{e \in \treebottomedges} \delta_T(e)\text{.}
\end{split}
\end{equation*}
\end{theorem}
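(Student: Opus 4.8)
The plan is short: the SNPR neighbourhood of $N$ decomposes as a disjoint union of the \SNPR, \SNPRP and \SNPRM tree-child neighbourhoods, so its size is obtained by simply adding the three closed forms already established in \cref{thm:tchi:SNPR:neighbourhood,lem:tchi:SNPRP:neighbourhood} and collecting terms.

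First I would establish disjointness. An \SNPR operation leaves the number of reticulations unchanged, an \SNPRP operation increases it by exactly one, and an \SNPRM operation decreases it by exactly one. Hence, for $N\in\tchinets$ with $r$ reticulations, every network in $U_{\SNPR}^{\tchinetsNoN}(N)$ has $r$ reticulations, every network in $U_{\SNPRP}^{\tchinetsNoN}(N)$ has $r+1$, and every network in $U_{\SNPRM}^{\tchinetsNoN}(N)$ has $r-1$. These three sets are therefore pairwise disjoint, and since by definition $U_{\OPSNPR}^{\tchinetsNoN}(N)$ is their union, we get
$\abs{U_{\OPSNPR}^{\tchinetsNoN}(N)} = \abs{U_{\SNPR}^{\tchinetsNoN}(N)} + \abs{U_{\SNPRP}^{\tchinetsNoN}(N)} + \abs{U_{\SNPRM}^{\tchinetsNoN}(N)}$.
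This is the one point worth spelling out explicitly, because $U_{\OPSNPR}^{\tchinetsNoN}(N)$ is a set of neighbours rather than a multiset of operations, so one must rule out a single network being reached by two different operation types — which the reticulation-count argument does.

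Then I would substitute the formulas of \cref{thm:tchi:SNPR:neighbourhood} and \cref{lem:tchi:SNPRP:neighbourhood} and simplify. The $r^2$ contributions ($+2r^2$ from the \SNPR count and $-2r^2$ from the \SNPRP count) cancel; the terms linear in $n$ combine to $-14n-10n=-24n$; the terms linear in $r$ combine to $-11r+4r+2r=-5r$; the triangle terms combine to $5\trias-\trias=4\trias$; the constants combine to $14+6=20$; and the descendant sums combine to $-\sum_{e\in E_{T^*}}\delta(e)-\sum_{e\in E_R}\delta_T(e)-\sum_{e\in E_{PS}}\delta_T(e)+\sum_{e\in\treebottomedges}\delta_T(e)$. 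All remaining coefficients — those of $n^2$, $nr$, $n(r_2+r_3+\trias)$, $r(3r_2+3r_3+\trias)$, $r_1$, $r_2$, $r_3$, $\treebranchtriangles$, $\dias$ and $\trapis$ — carry over unchanged, yielding the stated expression.

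There is no conceptual obstacle here; the only real work is the bookkeeping in the final addition, and the subtlety to guard against is making sure the three neighbourhoods are genuinely disjoint before adding their cardinalities, which the reticulation-number argument guarantees.
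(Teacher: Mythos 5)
Your proof is correct and takes essentially the same approach as the paper, which obtains \cref{thm:tchi:SNPRneighbourhood} by simply adding the formulas of \cref{thm:tchi:SNPR:neighbourhood,lem:tchi:SNPRP:neighbourhood}; your term-by-term bookkeeping (cancellation of $2r^2$, $-24n$, $-5r$, $4\trias$, constant $20$, and the four descendant sums) matches the stated expression. Your explicit disjointness argument via the change in reticulation number is a small extra justification that the paper leaves implicit, but it does not change the route.
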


We conclude this section with the comment that, since each parameter and sum can be computed in
$\Oh(n)$ time ($r, m \in \Oh(n)$), the unit SNPR tree-child neighbourhood size of a tree-child network $N$
can be computed in linear time $\Oh(n)$.

\section{Minimal and maximal neighbourhoods}
\label{sec:minMaxValues}

The formula for the unit SNPR tree-child neighbourhood of a tree-child network depends on a lot of
parameters. It is therefore of interest to see how small and big a neighbourhood can get in terms
of $n$.

\begin{proposition}
Let $n \geq 2$. Then
\begin{equation*}
\begin{split}
	            n - 1 \leq &\min_{N \in \tchinets} \{ \abs{ U_{\OPSNPR}^{\tchinetsNoN}(N) } \}  
	             	\leq \frac{3}{2}n^2 - \frac{7}{2}n + 2\text{, and}\\
	8n^2 - \Oh(n \log_2 n) \leq & \max_{N \in \tchinets} \{ \abs{ U_{\OPSNPR}^{\tchinetsNoN}(N) } \}  
					< 16n^2 - 38n + 26\text{.}
\end{split}
\end{equation*}
\end{proposition}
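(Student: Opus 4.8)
The plan is to establish the four bounds by combining the closed formula from \cref{thm:tchi:SNPRneighbourhood} with extremal choices of network topology. The lower bounds come from exhibiting concrete families of tree-child networks on $n$ leaves, while the upper bounds come from crude estimates on each term of the formula together with structural constraints (in particular $r \le n-1$ for tree-child networks, and the relations among $r_1, r_2, r_3, \trias, \treebranchtriangles, \dias, \trapis$ and the $\delta$-sums).

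For the \emph{minimum lower bound} $n-1$, I would first observe that Bordewich \etal\ showed $\tchinets$ is connected under SNPR with diameter linear in $n$, so in particular every network has at least one neighbour; but to get $n-1$ one needs a genuine lower bound on the formula. I would argue that from any $N \in \tchinets$ there are always at least $n-1$ distinct neighbours reachable, e.g.\ by \SNPRP operations creating a new reticulation in ``independent'' places, or by counting \SNPR operations that prune a leaf edge and regraft it — a leaf edge is a pure tree edge, is never critical, and has $\delta = 0$, so each of the $n$ leaf edges contributes many operations, and one shows at least $n-1$ of the resulting networks are pairwise distinct and distinct from $N$. For the \emph{minimum upper bound} $\tfrac32 n^2 - \tfrac72 n + 2$, I would plug a specific ``caterpillar-like'' tree-child network (the one minimising the neighbourhood) into the formula. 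A natural candidate is the network obtained from the caterpillar tree by stacking reticulations so as to maximise the subtracted $\delta$-sums and the negative structural terms; evaluating the formula on this family and simplifying gives the claimed quadratic. The main subtlety is guessing the right extremal family and then verifying the algebra; I expect $\tfrac32 n^2$ to arise because a path-like network makes $\sum_{e \in E_{T^*}}\delta(e)$ grow like $\tfrac12 n^2$ rather than the $\tfrac14 n^2$ of a balanced tree, roughly halving the leading coefficient of the $8n^2$ term — actually reducing $8n^2$ to about $\tfrac{15}{2}n^2$, so one must be careful that reticulations further lower it to $\tfrac32 n^2$.

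For the \emph{maximum upper bound} $< 16n^2 - 38n + 26$, I would bound $\abs{U_{\OPSNPR}^{\tchinetsNoN}(N)}$ from above by dropping all the (nonnegative) subtracted $\delta$-sums and structural penalties, keeping only the positive terms $8n^2 + 8nr + \dots$, and then substituting the maximal value $r = n-1$. This yields $8n^2 + 8n(n-1) + \ldots = 16n^2 + O(n)$; one then checks that the lower-order terms make this strictly less than $16n^2 - 38n + 26$, using that when $r = n-1$ the terms $-24n - 5r - 4r_1 + \ldots$ contribute enough negativity. Care is needed because $r = n-1$ forces many structures (a tree-child network with $n-1$ reticulations is very constrained — essentially a ``stack''), so $r_1, r_2, r_3$ cannot all be zero and one must not accidentally overcount; but for an \emph{upper} bound it suffices to keep only positive contributions and use $0 \le r \le n-1$, $0 \le r_1 \le r$, etc. For the \emph{maximum lower bound} $8n^2 - O(n\log_2 n)$, I would exhibit a family of tree-child networks — a balanced or near-balanced topology with a moderate number of reticulations placed to keep all the subtracted sums small — for which $\sum_{e\in E_{T^*}}\delta(e)$, $\sum_{e\in E_R}\delta_T(e)$ and $\sum_{e\in E_{PS}}\delta_T(e)$ are each $O(n\log_2 n)$ (as in a complete binary tree, where $\sum_e \delta(e) = \Theta(n\log n)$), and all structural parameters are $O(n)$ or smaller, leaving the leading term $8n^2$ intact.

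\textbf{Main obstacle.} The hard part will be the two extremal \emph{constructions} — pinning down the precise tree-child network families that achieve (or nearly achieve) the minimum and maximum, and then pushing the $\delta$-sum bookkeeping through. In particular, proving the $O(n\log_2 n)$ bound for the maximiser requires a balanced network whose reticulation edges and pure-sibling edges all sit high in the tree (small $\delta_T$), while simultaneously not creating triangles/diamonds/trapezoids that would subtract from the count in the wrong direction (note $+\sum_{e\in\treebottomedges}\delta_T(e)$ actually \emph{helps}, but such edges are scarce); balancing these competing requirements, and bounding the resulting sum by $O(n\log_2 n)$ rather than $\Theta(n^{3/2})$ or worse, is the crux. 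The other three bounds are comparatively routine: crude term-by-term estimates with $r \le n-1$ for the upper bounds, and a direct formula evaluation on a caterpillar-type network for the minimum upper bound.
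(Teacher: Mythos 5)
Your overall plan matches the paper for the two maximum bounds: the lower bound $8n^2-\Oh(n\log_2 n)$ is obtained exactly as you suggest, except that no reticulations are needed at all -- the paper just plugs the balanced tree ($r=0$) into \cref{thm:tchi:SNPRneighbourhood}, where the formula collapses to $8n^2-24n+20-\sum_{e\in E}\delta(e)-\sum_{e\in E_{PS}}\delta_T(e)$ and both sums are exactly $2n\log_2 n+\Oh(n)$ for $n=2^k$; so what you flag as the crux is the routine part. The genuine gaps are elsewhere. First, your argument for $\min\ge n-1$ does not work: a leaf edge can be critical (the tree child $v$ of the middle vertex of an $r_2$, $r_3$ or triangle structure may be a leaf), and on the extremal chain-of-triangles network every prunable leaf edge is the critical edge of a triangle, so pruning it admits only trivial regrafts; moreover that network has $r=n-1$, so by \cref{lem:SNPRP:numOps} it admits no tree-child respecting \SNPRP operation at all, which also kills your ``independent \SNPRP placements'' alternative. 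The paper instead adds the $2r-\trias$ \SNPRM neighbours (\cref{lem:tchi:SNPRP:neighbourhood}) to the $2n-2r-2$ \SNPRP neighbours obtained by regrafting from the root edge, and uses $\trias\le n-1$ to get $2n-\trias-2\ge n-1$; you need an argument of this kind that survives the $r=n-1$ case.

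Second, your estimate for the maximum falls one order-$n$ term short of the stated constant: discarding all subtracted $\delta$-sums and structural terms and setting $r=n-1$ gives at best $8n^2+8nr-24n-5r+20=16n^2-37n+25$, which is \emph{not} below $16n^2-38n+26$ for any $n\ge2$. The paper recovers the missing $-(n-1)$ from the sums themselves: by the tree-child property the edge below each reticulation is a tree edge and is a descendant of both incoming reticulation edges, so $\sum_{e\in E_R}\delta_T(e)\ge 2r$, while $\sum_{e\in\treebottomedges}\delta_T(e)$ is at most half of that sum, so the four sums together contribute at most $-r$; with $r=n-1$ this yields the claimed strict bound. Finally, for the upper bound on the minimum you only gesture at a ``caterpillar with stacked reticulations''; the paper's witness is the chain of $n-1$ triangles, counted directly rather than through the formula: it has no \SNPRP neighbours, exactly $n-1$ \SNPRM neighbours, and its only non-critically prunable edges are the triangles' long sides, giving $3+6+\cdots+3(n-2)=\tfrac32 n^2-\tfrac92 n+3$ \SNPR neighbours. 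Your heuristic via $\sum_e\delta(e)$ does not explain the coefficient $\tfrac32$, which really comes from almost every tree edge of this network being critical; as written, the proposal needs these repairs before it establishes the proposition.
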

\begin{proof}
We first establish a lower bound for the minimal neighbourhood size of a tree-child network. 
Let $N \in \tchinets$ with $n \geq 2$ and $r$ reticulations. 
By \cref{lem:tchi:SNPRP:neighbourhood}, we have that 
$\abs{ U_{\SNPRM}^{\tchinetsNoN}(N) } = 2r - \trias$. Each reticulation gives rise to two different
\SNPRM operations with redundancy sets of size at most two.
Furthermore, a reticulation edge can be added from the root edge $e_{\root}$ to every other pure
tree edge that is not sibling edge of a reticulation edge. There are $2n - 2r - 2$ such edges. Note
that $\trias \leq n -1$. There are thus at least $2n - 2r - 2 + 2r - \trias = 2n - \trias - 2 \geq
n - 1$ SNPR tree-child neighbours of $N$. This is sharp for a tree-child network with $n = 2$ and $r = 1$.

Next, we look at an upper bound for the minimal neighbourhood size of a tree-child network.
For this we consider a family of tree-child networks, where each has a relative small neighbourhood. Let
$N_r \in \tchinets, n \geq 2$ be a chain of $r$ triangles, where one triangle is child of the
triangle above it in the chain, like $N_4$ in \cref{fig:tchi:neighbourhood:extremeValues}.
Since $N_r$ has $n - 1$ reticulations, $N_r$ has no \SNPRP neighbours.
Removing a reticulation edge from one of the triangles corresponds to one of $r = n - 1$ different
\SNPRM neighbours of $N_r$.
Concerning \SNPR operations, the only prunable non-critical edges are the long sides of the
triangles (and the bottom sides, which however behave redundantly and are thus ignored). Since
these are reticulation edges they can, when pruned, only be regrafted to tree edges that are
not their descendants. For the long side of the triangle closest to the root three
such edges exist, which however correspond to trivial operations. For the long side of the triangle
below six such edges exist, of which again three yield trivial operation. Thus, in total
there are $$3 + 6 + 9 + 12 + \ldots + (n - 2)3 = \sum_{i = 1}^{n -2}3i = \frac{3}{2}(n - 2)(n - 1) =
\frac{3}{2}n^2 - \frac{9}{2}n + 3$$ \SNPR neighbours. 
All together, $N_r$ has $\frac{3}{2}n^2 - \frac{7}{2}n + 2$ SNPR neighbours.

\begin{figure}[htb]
 \centering
 \includegraphics{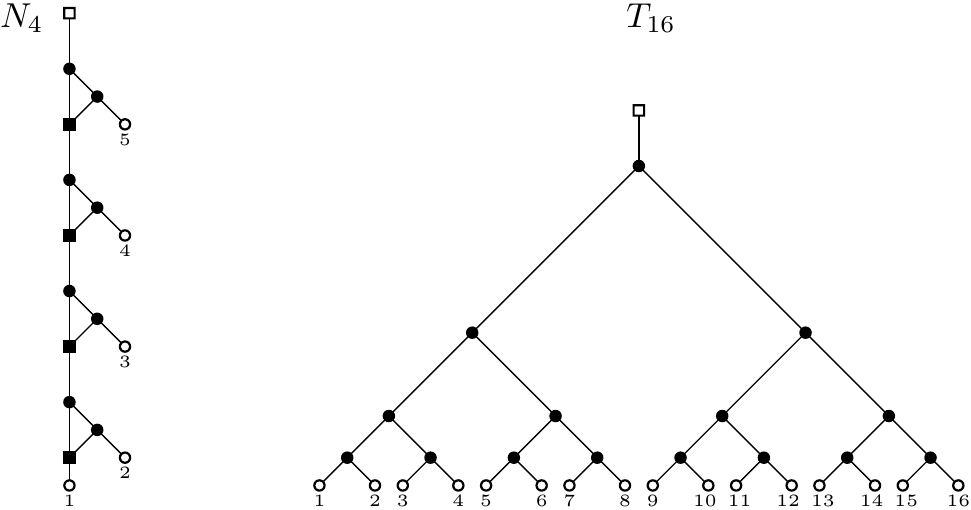}
 \caption{Tree-child networks with small and big neighbourhoods: The tree-child network $N_4$ has
 $\frac{3}{2}5^2 - \frac{7}{2}5 + 2 = 22$ SNPR neighbours.
 The balanced tree $T_{16}$ has at least 
 $8 \cdot 16^2 - 4 \cdot 16 \log_2 16 - 18 \cdot 16 + 14 = 1518$ 
 SNPR neighbours.}
 \label{fig:tchi:neighbourhood:extremeValues}
\end{figure}

For a lower bound of the maximal SNPR tree-child neighbourhood size of a tree-child network, we consider the
balanced tree $T_n$ on $n$ leaves, as illustrated by $T_{16}$ in
\cref{fig:tchi:neighbourhood:extremeValues}. 
The formula for the SNPR neighbourhood (\cref{thm:tchi:SNPRneighbourhood}) is then $$8n^2
- 24n + 20 - \sum_{e \in E} \delta(e) - \sum_{e \in E_{PS}} \delta_T(e)\text{.}$$ 
For simplicity, we now assume that $n = 2^k$ with $k \geq 1$.
Then, for the first sum we have $$\sum_{e \in E} \delta(e) 
	= \sum_{i = 1}^{\log_2 n} i 2^i 
	= 2n \log_2 n - 2n + 2\text{.}$$
The second sum only differs from the first by the fact that the root edge $e_\root$ is not in $E_{PS}$
and thus 
$$\sum_{e \in E_{PS}} \delta_T(e) 
	= 2n \log_2 n - 2n + 2 - \delta(e_\root) 
	= 2n \log_2 n - 4n + 4\text{.}$$
In total this yields that, for $n = 2^k$, $T_n$ has $8n^2 - 4n \log_2 n - 18n + 14$ SNPR neighbours.
With no restriction on $n$, the tree $T_n$ has at least $8n^2 - \Oh(n \log_2 n)$ SNPR neighbours.

For an upper bound of the maximal \SNPR neighbourhood size, we estimate bounds for the various
parameters. 
We thus assume that the parameters $r_1, r_2, r_3, \trias, \dias$, and $\trapis$ are zero.
Concerning the sums $- \sum_{e \in E_{T^*}} \delta(e) - \sum_{e \in E_R} \delta_T(e) - \sum_{e \in
E_{PS}} \delta_T(e) + \sum_{e \in E_{\bar{b}_3}} \delta_T(e)$ of the SNPR neighbourhood formula, we
observe that the first and third sum are at best zero, that the second sum is at best $2r$ and that
the last sum is at best half of the second. The sums account therefore only for $-r$ in our
estimate. Assuming that $r = n - 1$ and thus maximal, we get that for any tree-child network $N$ the unit
SNPR tree-child neighbourhood has size at most $16n^2 - 38n + 26$.
\end{proof}
\section{Discussion and outlook}
\label{sec:discussion}

In this paper, we have presented formulas for the unit SNPR \tchi neighbourhood size of \tchi
networks.
We have shown that the neighbourhood size does not only depend on the number of leaves and
reticulations, but also on the shape of the network. In the formulas the shape is represented by the
occurrences of certain structures, like triangles and diamonds, and the number of descendant edges
of certain edge sets. The size of the \SNPRM neighbourhood is at most $n - 1$, because the number of
reticulations in a \tchi network is at most \mbox{$n - 1$}~\cite[Proposition 1]{CRV09}. On the other
hand, the \SNPR and \SNPRP neighbourhoods can both have a quadratic size in terms of $n$.
We presented further bounds on the minimal and maximal neighbourhood size.

The main tool in our proofs of redundancy of operations was \cref{thm:tchi:oneAutomorphism}, which
states every \tchi network has exactly one automorphism that fixes its leaf set. 
This allowed us to pinpoint
redundancies to some simple structures. Our methodology can be applied to other network classes,
especially subclasses of \tchi networks.
For example, normal networks are \tchi networks that do not contain both an edge $(u, v)$ and a path
from $u$ to $v$ that consists of at least two edges. Vertices and edges are thus also unique in
normal networks and, moreover, they do not contain triangles or trapezoids. 
While this simplifies counting \SNPR redundancies, it comes at the cost that counting all
normal-respecting \SNPR operations get harder. 
Another class that is suitable for our methodology is the class of level-1 networks. A \phynet is
level-1 if each of its biconnected components contains at most one reticulation. A level-1 network
without parallel edges is thus also a \tchi network. 
The size of the unit SNPR neighbourhood for normal and level-1 networks can be found in the
author's thesis~\cite{SpOPhyN}.

\begin{figure}[htb]
 \centering
 \includegraphics{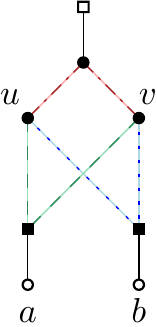}
 \caption{A \retvis (and \tbased) network with non-unique vertices and edges. The vertices $u$ and
 $v$ are isomorphic and so are the three groups of edges in same colour and style.}
 \label{fig:symmetries}
\end{figure}

Finding a formula for the unit SNPR neighbourhood is more complicated for the network classes
of \retvis and \tbased networks. 
A \phynet is \emph{reticulation visible} if every reticulation separates the root from at least one
leaf~\cite{HK07}. Roughly speaking, a \phynet is \emph{tree based} if it is contains a \phytree on
the same leaf set that covers all its vertices~\cite{FS15}.  
The class of \tchi networks is known to be a subclass of both the classes of \retvis and \tbased
networks~\cite{FS15}. 
However, \cref{fig:symmetries} shows a \retvis (and thus also \tbased)
network with only two leaves and two reticulations, where two isomorphic vertices and three
pairs of isomorphic edges exist. More complicated \phynets with bigger sets of pairwise isomorphic
vertices or edges can easily be found. Isomorphic edges in a \phynet have the consequence that
pruning one of them or regrafting to one of them is redundant to any of the others. Determining the
size of a neighbourhood would thus require to identify and account for all equivalences.
While we think that a recursive approach (similar to the work of Song~\cite{Son03,Son06}) to count
neighbours might still be possible, a closed formula is likely to require even more parameters.

The space of \rphynets under NNI has, in contrast to the unrooted case, not found much attention
in the literature so far. We defined NNI operations for \tchi networks as a tool to count several
redundancies. However, to consider the space itself and its properties could be of interest.

\section*{Acknowledgements}
I would like to thank the New Zealand Marsden Fund for their financial support, Simone Linz for
spirited discussions, guidance and helpful comments, and the reviewers for their helpful comments
and suggestions.

\clearpage

\bibliography{sources}

\begin{thebibliography}{10}

\bibitem{AS01}
B.~L. Allen and M.~Steel.
\newblock Subtree transfer operations and their induced metrics on evolutionary
  trees.
\newblock {\em Annals of Combinatorics}, 5(1):1--15, 2001.

\bibitem{BMSW15}
S.~Baskowski, V.~Moulton, A.~Spillner, and T.~Wu.
\newblock Neighborhoods of trees in circular orderings.
\newblock {\em Bulletin of Mathematical Biology}, 77(1):46--70, 2015.

\bibitem{BLS16}
M.~Bordewich, S.~Linz, and C.~Semple.
\newblock Lost in space? {G}eneralising subtree prune and regraft to spaces of
  phylogenetic networks.
\newblock {\em Journal of Theoretical Biology}, 423:1--12, 2017.

\bibitem{BS16}
M.~Bordewich and C.~Semple.
\newblock Determining phylogenetic networks from inter-taxa distances.
\newblock {\em Journal of Mathematical Biology}, 73(2):283--303, 2016.

\bibitem{BST17}
M.~Bordewich, C.~Semple, and N.~Tokac.
\newblock Constructing tree-child networks from distance matrices.
\newblock {\em Algorithmica}, pages 1--20, 2017.

\bibitem{CRV09}
G.~Cardona, F.~Rossello, and G.~Valiente.
\newblock Comparison of tree-child phylogenetic networks.
\newblock {\em IEEE/ACM Trans. Comput. Biol. Bioinformatics}, 6(4):552--569,
  Oct 2009.

\bibitem{CLS14}
P.~Cordue, S.~Linz, and C.~Semple.
\newblock Phylogenetic networks that display a tree twice.
\newblock {\em Bulletin of Mathematical Biology}, 76(10):2664--2679, 2014.

\bibitem{DJMLS16}
J.~V. de~Jong, J.~C. McLeod, and M.~Steel.
\newblock Neighborhoods of phylogenetic trees: Exact and asymptotic counts.
\newblock {\em SIAM Journal on Discrete Mathematics}, 30(4):2265--2287, 2016.

\bibitem{FHMW17}
A.~Francis, K.~T. Huber, V.~Moulton, and T.~Wu.
\newblock Bounds for phylogenetic network space metrics.
\newblock {\em Journal of Mathematical Biology}, 76(5):1229--1248, Apr 2018.

\bibitem{FS15}
A.~R. Francis and M.~Steel.
\newblock Which phylogenetic networks are merely trees with additional arcs?
\newblock {\em Systematic Biology}, 64(5):768--777, 2015.

\bibitem{GBP09}
P.~Gambette, V.~Berry, and C.~Paul.
\newblock The structure of level-k phylogenetic networks.
\newblock In G.~Kucherov and E.~Ukkonen, editors, {\em Combinatorial Pattern
  Matching}, pages 289--300. Springer Berlin Heidelberg, 2009.

\bibitem{GvIJLPS17}
P.~Gambette, L.~van Iersel, M.~Jones, M.~Lafond, F.~Pardi, and C.~Scornavacca.
\newblock Rearrangement moves on rooted phylogenetic networks.
\newblock {\em PLOS Computational Biology}, 13(8):1--21, Aug 2017.

\bibitem{HLMW16}
K.~T. Huber, S.~Linz, V.~Moulton, and T.~Wu.
\newblock Spaces of phylogenetic networks from generalized nearest-neighbor
  interchange operations.
\newblock {\em Journal of Mathematical Biology}, 72(3):699--725, 2016.

\bibitem{HMW16}
K.~T. Huber, V.~Moulton, and T.~Wu.
\newblock Transforming phylogenetic networks: Moving beyond tree space.
\newblock {\em Journal of Theoretical Biology}, 404:30--39, 2016.

\bibitem{HW13}
P.~J. Humphries and T.~Wu.
\newblock On the neighborhoods of trees.
\newblock {\em IEEE/ACM Transactions on Computational Biology and
  Bioinformatics}, 10(3):721--728, May 2013.

\bibitem{HK07}
D.~H. Huson and T.~H. Kl{\"o}pper.
\newblock Beyond galled trees - decomposition and computation of galled
  networks.
\newblock In T.~Speed and H.~Huang, editors, {\em Research in Computational
  Molecular Biology}, pages 211--225. Springer Berlin Heidelberg, 2007.

\bibitem{JJEvIS17}
R.~Janssen, M.~Jones, P.~L. Erd{\H{o}}s, L.~van Iersel, and C.~Scornavacca.
\newblock Exploring the tiers of rooted phylogenetic network space using tail
  moves.
\newblock {\em Bulletin of Mathematical Biology}, Jun 2018.

\bibitem{SpOPhyN}
J.~Klawitter.
\newblock Spaces of phylogenetic networks.
\newblock {PhD thesis}, University of Auckland, in preparation.

\bibitem{MSW15}
C.~McDiarmid, C.~Semple, and D.~Welsh.
\newblock Counting phylogenetic networks.
\newblock {\em Annals of Combinatorics}, 19(1):205--224, 2015.

\bibitem{Rob71}
D.~Robinson.
\newblock Comparison of labeled trees with valency three.
\newblock {\em Journal of Combinatorial Theory, Series B}, 11(2):105--119,
  1971.

\bibitem{Son03}
Y.~S. Song.
\newblock On the combinatorics of rooted binary phylogenetic trees.
\newblock {\em Annals of Combinatorics}, 7(3):365--379, 2003.

\bibitem{Son06}
Y.~S. Song.
\newblock Properties of subtree-prune-and-regraft operations on totally-ordered
  phylogenetic trees.
\newblock {\em Annals of Combinatorics}, 10(1):147--163, 2006.

\bibitem{vISS10}
L.~van Iersel, C.~Semple, and M.~Steel.
\newblock Locating a tree in a phylogenetic network.
\newblock {\em Information Processing Letters}, 110(23):1037--1043, 2010.

\bibitem{Wil07}
S.~J. Willson.
\newblock Unique determination of some homoplasies at hybridization events.
\newblock {\em Bulletin of Mathematical Biology}, 69(5):1709--1725, Jul 2007.

\bibitem{Wil10}
S.~J. Willson.
\newblock Properties of normal phylogenetic networks.
\newblock {\em Bulletin of Mathematical Biology}, 72(2):340--358, Feb 2010.

\end{thebibliography}
\bibliographystyle{abbrv}

\end{document}